%------------------------------------------------------------------------------
% Beginning of journal.tex
%------------------------------------------------------------------------------
%
% AMS-LaTeX version 2 sample file for journals, based on amsart.cls.
%
%        ***     DO NOT USE THIS FILE AS A STARTER.      ***
%        ***  USE THE JOURNAL-SPECIFIC *.TEMPLATE FILE.  ***
%
% Replace amsart by the documentclass for the target journal, e.g., tran-l.
%
\documentclass{amsart}

%     If your article includes graphics, uncomment this command.
\usepackage{graphicx}
\usepackage{color}
\usepackage{enumerate}
\usepackage{accents}

\newtheorem{theorem}{Theorem}[section]
\newtheorem{lemma}[theorem]{Lemma}
\newtheorem{proposition}[theorem]{Proposition}

\theoremstyle{definition}
\newtheorem{definition}[theorem]{Definition}
\newtheorem{example}[theorem]{Example}

\newtheorem{corollary}[theorem]{Corollary}

\newtheorem{hypothesis}{Hypothesis}
\theoremstyle{remark}
\newtheorem{remark}[theorem]{Remark}

\numberwithin{equation}{section}

%    Absolute value notation

%    Blank box placeholder for figures (to avoid requiring any
%    particular graphics capabilities for printing this document).

\newcommand*\interior[1]{#1^{\mathsf{o}}}

\renewcommand{\mod}{\text{ mod }}

\newcommand{\R}{\mathbb{R}}
\newcommand{\N}{\mathbb{N}}
\newcommand{\Z}{\mathbb{Z}}
\newcommand{\Y}{\mathbb{Y}}

\renewcommand{\a}{\mathbb{A}}
\renewcommand{\aa}{\a^{2}}

\newcommand{\2}{(\xi,x)}

\newcommand{\pp}{{ \phi^{+}}}
\newcommand{\pn}{{ \phi^{-}}}
\newcommand{\ps}{{ \phi^{*}}}
\newcommand{\ppn}{{ \phi^{\pm}}}

\newcommand{\tpp}{{\tilde \phi^{+}}}
\newcommand{\tpn}{{\tilde \phi^{-}}}
\newcommand{\tppn}{{\tilde \phi^{\pm}}}
\newcommand{\tp}{{\tilde P}}
\newcommand{\tcp}{{\tilde C^{+}}}
\newcommand{\tcn}{{\tilde C^{-}}}
\newcommand{\tcpn}{{\tilde C^{\pm}}}

\newcommand{\hs}{{\hat S}}
\newcommand{\hp}{{\hat \phi}}
\newcommand{\hpp}{{\hat \phi}^+}
\newcommand{\hpn}{{\hat \phi}^-}
\newcommand{\hppn}{{\hat\phi}^{\pm}}
\newcommand{\hP}{{\hat \Phi}}
\newcommand{\hf}{{\hat f}}
\newcommand{\hpi}{{\hat P}}
\newcommand{\hk}{\hat{\k}}

\newcommand{\f}{{f}_{\theta}}
\newcommand{\g}{{G}_{\theta}}

\newcommand{\Tt}{{\mathbb{T}^{2}}}
\newcommand{\T}{{\mathbb{T}^1}}
\newcommand{\I}{{\mathbb{I}}}

\newcommand{\cE}{\mathcal{E}}

\renewcommand{\k}{\mathrm{K}}

\newcommand{\m}{\mathcal{M}_{S}}

\newcommand{\C}{C^{1}}

\newcommand{\CCC}{C^{3}}

\newcommand{\de}{d^{\mathfrak{1}}}
\newcommand{\dze}{d^{\mathfrak{1}}}
\newcommand{\dz}{d^{\mathfrak{0}}}
\newcommand{\dzz}{d^{\mathfrak{0}}}

\newcommand{\q}{Q_{f}}
\newcommand{\qq}{Q'_{f}}

\newcommand{\clpn}{cl(\Phi^{\pm})}

\newcommand{\filpn}{[\Phi^{\pm}]}

\newcommand{\hclpn}{cl(\hP^{\pm})}

\newcommand{\hfilpn}{[\hP^{\pm}]}

\newcommand{\GA}{{\rm GA}}

\newcommand{\id}{{\rm id}}
\newcommand{\supp}{\operatorname{supp}}

\begin{document}

\title{Invariant Graphs for Chaotically Driven Maps}

%    Information for first author
%\author{Author One}
\author[Fadaei]{S. Fadaei}
\address{Department of Mathematics, Ferdowsi University of Mashhad,
Mashhad, Iran.  }
\email{faddaei.262@gmail.com}

\author[Keller]{G. Keller}
%    Address of record for the research reported here
\address{Department of Mathematics, University of Erlangen-Nuremberg, Cauerstr. 11, 91058 Erlangen, Germany}
%    Current address
%\curraddr{Department of Mathematics and Statistics,
%Case Western Reserve University, Cleveland, Ohio 43403}
\email{keller@math.fau.de}

\author[Ghane]{F. H. Ghane$^{*}$}
\address{Department of Mathematics, Ferdowsi University of Mashhad,
Mashhad, Iran.  }
\email{ghane@math.um.ac.ir}
\thanks{$^*$Corresponding author}
%    \thanks will become a 1st page footnote.
%\thanks{The first author was supported in part by NSF Grant \#000000.}

%    Information for second author
%\author{Author Two}
%\address{Mathematical Research Section, School of Mathematical Sciences,
%Australian National University, Canberra ACT 2601, Australia}
%\email{two@maths.univ.edu.au}
%\thanks{Support information for the second author.}

%    General info
\subjclass[2000]{Primary 37C70, 37G30, 37H99; Secondary 34D06}

%\date{January 1, 2001 and, in revised form, June 22, 2001.}

%\dedicatory{This paper is dedicated to our advisors.}

\keywords{Invariant graph, skew product, synchronization, negative Schwarzian derivative, pinch
points.}

\begin{abstract}
This paper investigates the geometrical structures of invariant graphs of skew product systems of the form $F : \Theta \times \I \to \Theta \times \I , (\theta,y)\mapsto (S\theta,\f(y))$ driven by a hyperbolic base map $S : \Theta \to \Theta$
(e.g. a baker map or an Anosov surface diffeomorphism)
and with monotone increasing fibre maps $(f_{\theta})_{\theta \in \Theta}$ having negative Schwartzian derivatives.
We recall a classification, with respect to the number and to the Lyapunov exponents of invariant graphs, for this class of systems.
Our major goal here is to describe the structure of invariant graphs and study the properties of the
pinching set, the set of points where the values of all of the invariant graphs coincide.
In \cite{KO2}, the authors studied skew product systems driven by a generalized Baker map $S:\Tt\to\Tt$ with the restrictive assumption that $\f$ depend on $\theta=\2$ only through the stable coordinate $x$ of $\theta$. Our aim is to relax this assumption and construct a fibre-wise conjugation
between the original system and a new system for which the fibre maps depend only on the stable coordinate of the
derive.
%take the case that the fibre map depends only on the stable coordinate of the drive.
\end{abstract}

\maketitle

\section{Introduction}
%%%%%%%%%%%%%%%%%%%%%%%%%%%%%%%%%%%%%%%
\subsection{Motivation and related works}\label{sub:mot}
The main objective in this paper is to describe the geometric structures of invariant graphs of a certain
class of skew products.
The existence of invariant graphs considerably simplifies the dynamics of the forced systems and they are currently object of intense study.

A skew product system is a dynamical system $(\Theta \times \Y , F )$ which can be written as
$$F : \Theta \times \Y \to \Theta \times \Y , \quad (\theta,y)\mapsto (S\theta,\f(y)).$$
Here the dynamics on the fibre space $\Y$ may be interpreted as being driven by another system
$(\Theta, S)$ since the transformations $\f : \Y \mapsto \Y$ depend on $\theta$. For example, $\theta$ can be used to induce
an additive or multiplicative external noise, i.e. $\f$ is of the form
\begin{itemize}
    \item $\f(y) = h(y) + g(y)$, or
    \item $\f(y) = g(\theta)h(y)$,
\end{itemize}
where $g(\theta)$ represents the random noise. On the other hand, the space $\Y$ is principally considered
as the fibre space over the basis dynamics $(\Theta , S)$, i.e. the fibre map $\f$ can be considered as a map
from $\{\theta\} \times \Y$ to $\{S\theta\} \times \Y$, where $\{\theta\} \times \Y$ is the fibre space over $\theta \in \Theta$. In fact, this natural
structure appears in innumerable examples of the dynamics which are relevant theoretically or for
applications. Furthermore, an invariant graph $\phi : \Theta \to \Y$ is a function that satisfies
$$F(\theta,\phi(\theta)) = (S\theta , \phi(S\theta)),$$
for all $\theta \in \Theta$.

In \cite{St1, St2}, Stark provided
conditions for the existence and regularity of invariant graphs
and discussed a number of applications to the filtering of time series, synchronization and quasiperiodically forced systems.
Campbell and Davies \cite{CD} proposed related results on the ergodic properties of attracting graphs and stability results for such graphs
under deterministic perturbations.
Invariant graphs have a wide variety of applications in many branches of nonlinear dynamics,
some knowledge of such applications is also available, through
works of several authors
(e.g. \cite{CD, DC, HOJ1, HOJ2, J1, K1, KO2, PC, SD} etc.).

In this paper, we study geometrical structures of the invariant graphs in a simple but important
case where $\Y = \I \subseteq \R$ and $\f$ is monotone increasing.
Besides, we focus on skew product systems whose monotone fibre maps $(f_{\theta})_{\theta \in \Theta}$ possess
negative Schwarzian derivatives. Here the last condition means that
$$\dfrac{f_{\theta}'''}{f_{\theta}'} - \frac{3}{2} \Big( \frac{f_{\theta}''}{f_{\theta}'} \Big) ^{2} < 0, $$
is satisfied for all $\theta \in \Theta$. Note that in our setting this condition guarantees that there do not exist more than three
invariant graphs, which especially allows us to study the bounding graphs, i.e. the lower and upper outermost graphs $\pn$ and $\pp$. In other words, with this technical tool, we can investigate the fine structure
of all of the invariant graphs that possibly appear. Moreover, we assume that the skew
product dynamic is in total compressing, in the sense that $F(\Theta \times \I) \subseteq \Theta \times \interior{\I}$. Our major goal here is to describe the structure of invariant graphs and study the properties of the pinching set, the set of points where the values of all of the invariant graphs coincide, mainly, in the case that the basis dynamical system is an Anosov diffeomorphism or a generalized
Baker map on the two-dimensional torus $\Tt = \R^2 / \Z^2$.\\
If $S : \mathbb{T}^2 \to \mathbb{T}^2$ is the Baker map and if instead of branches with negative
Schwarzian derivative one studies affine branches $\f(y) = \lambda y + \cos(2\pi x)$ with $\lambda \in
(1/2, 1)$ (where $\theta = \2$), it turns out that $\pp=\pn$ everywhere and that this
is the graph of a classical Weierstrass function - a H\"older-continuous but nowhere
differentiable function. These functions and their generalizations have received much
attention during the last decades, culminating in the recent result that the graph of this function has Hausdorff dimension $ 2 + \frac{\log \lambda}{\log 2} $ for all $\lambda \in (1/2, 1)$, i.e. in the range
of parameters for which the skew product system $F$ is partially hyperbolic \cite{B1, Sh}.
Related, but less complete results were also obtained for generalizations where
the cosine function is replaced by other suitable functions, where the Baker map is
replaced by some of its nonlinear variants, or where the slope $\lambda$ of the contracting branches is also allowed to depend on $x$, see e.g. \cite{P, L, O}. In all these
cases, however, the fibre maps remain affine. Other results deal with the local H\"older
exponents of these graphs \cite{B3, B2}.\\
Let $S: \T \to \T$ be an irrational circle rotation. Such quasi-periodically forced systems
were studied by J\"ager in \cite{J1, J3}. He gives the following result \cite{J1} under the additional
assumption that $(\theta, y) \mapsto \f'(y)$ is continuous: There are three possible cases:\begin{enumerate}[(1)]
\item There exists only one Lebesgue equivalence class of invariant graphs $\phi$, and $\lambda(\phi)\leq 0$. This equivalence class contains at least an upper and a lower semi-continuous
representative. %If $\lambda(\phi) < 0$, then $\phi$is continuous. Otherwise its Lebesgue equivalence class contains at least an upper and a lower semi-continuous representative.
\item
There exist two invariant graphs $\phi$ and $\ps$ with $\lambda(\phi)<0$ and $\lambda(\ps) = 0$. The
upper invariant graph is upper semi-continuous, the lower invariant graph
lower semi-continuous.
\item There exist three invariant graphs $\pn \leq \ps \leq \pp$  with $\lambda(\ppn) < 0$ and $\lambda(\ps) > 0$.   $\pn$ is lower semi-continuous and $\pp$ is upper semi-continuous. If $\ps$
is neither upper nor lower semi--continuous, then      $\supp(m_{\pn}) = \supp(m_{\ps}) \\= \supp(m_{\pp})$.
\end{enumerate}

All three cases occur. For case (iii) this was proved in \cite{J3}.
In this note, we replace the quasi-periodic base by a chaotic one, namely by an Anosov diffeomorphism or a
Baker transformation. Roughly speaking,
we shall assume that the dynamic is in total compressing and investigate the pinching set of
the outermost graphs. Interestingly, it turns out that the separating graph $\ps$ plays also a key role
in our study.
Also, A. Bonifant and J. Milnor studied in \cite{B4} skew product systems fairly similar to
ours, i.e. ones with a chaotic basis dynamical system whose fibre maps have negative Schwarzian
derivatives. In spite of the fact that the pictures of their skew product dynamics are significantly different from what we study here, there are some relations. Thus we give a brief remark on this point. Namely, they assumed that the two outermost invariant graphs $\pn$ and $\pp$ are constant, say
-1 and 1, and both of them are attracting simultaneously. More recently, G. Keller and A. Otani studied in \cite{KO2} the hyperbolic case where $S:\Tt\to\Tt$ is a generalized Baker map with the restrictive assumption that $\f$ depend on $\theta=\2$ only through the stable coordinate $x$, so that  $\f$ can be written as  $f_{x}$, and $x \mapsto f_{x}$ is continuous
on $\T$, where $\T$ is the one-dimensional torus. Also $f'_{x}>0$ and $\mathcal{S}f_{x} < 0$. In this paper we are going to get rid of this assumption that fibre dynamics depend on $\theta \in \Theta$ only through the stable coordinate of the drive.
\subsection{Contribution of this paper}
We are interested in
skew product systems with chaotic basis dynamical systems. In this paper, we concentrate on skew product systems which have a nice hyperbolic structure at the base.\par
In Section 2, Proposition \ref{theo:g} and Theorem \ref{Lem:f} , we construct a fibre-wise conjugation  between the original system and a new system  which satisfies the restrictive assumption that the fibre dynamics depend only on the stable direction of the drive. By applying this technical tool and  in the case that the basis dynamical system is an Anosov diffeomorphism, we describe the structure of the pinching set which consists of those points at which the global attractor is pinched to one point in the fibre spaces  (Theorem \ref{theo:anosov}): it is a union of complete global stable fibers. In Section 3, we focus on skew products driven by a generalized Baker transformation and study the structure of pinching set in Theorem \ref{theo:baker}.

The construction of the fibre maps that depend only on the stable direction of the drive is inspired by Sinai's \cite{Si3} construction of a function on a one-sided shift space that is cohomologous to a given one on a two-sided shift space,
see also \cite[Lemma~1.6]{Bo}.

\subsection{General setting}
We describe the general assumptions of this paper which specify
the basic concepts introduced in Subsection \ref{sub:mot}. Note that at the beginning, we only restrict the fibre maps and
leave the basis dynamical system arbitrary. After several general facts are discussed, we
shall consider more specific basis maps. More precisely,
we are interested in skew product dynamical systems $F : \Theta \times\I\rightarrow\Theta\times\I$, $F(\theta,y) = (S\theta , f_{\theta}(y))$, defined as below:
\begin{definition}
$\mathcal{F}$ denotes the family of all skew product transformations $F$ on $\Theta\times\I$ where
\begin{itemize}
\item $(\Theta, d)$ is a compact metric space and $S : \Theta \rightarrow \Theta $ is a Borel measurable invertible map.
\item$\I = [-M,M]$ is a compact interval and $\interior{\I}=(-M,M)$ where $M \in \mathbb{R}$.
\item The fibre maps $ f_{\theta} : \I \to \interior{\I} $ are given by $f_{\theta}(y) = \pi_{2} \circ F(\theta,y) $  with $\pi_{2}$ the natural projection from $\Theta \times [-M,M]$ to $[-M,M]$. Its derivative with respect to $y\in \I$ will be denoted by $f'_{\theta}$. We will assume that the fiber maps $f_{\theta} $ are uniformly bounded increasing $\CCC$-maps with negative Schwarzian derivative, i.e.
\begin{equation}\label{def:sch}
    \inf_{(\theta,y)\in \Theta \times \I}\f'(y) > 0,\quad\text{ and }\quad\sup_{(\theta,y)\in \Theta \times \I}\mathcal{S}\f(y) < 0   ,
\end{equation}
where $\mathcal{S}f_{\theta} := \dfrac{f_{\theta}'''}{f_{\theta}'} - \dfrac{3}{2} \Big( \dfrac{f_{\theta}''}{f_{\theta}'} \Big) ^{2}$.
\end{itemize}
\end{definition}
\begin{remark}\label{rem:com}
By definition, the dynamic compresses the whole space, i.e. $F(\Theta \times \I) \subset \Theta \times \interior{\I}$.
\end{remark}

For the iterates  $F^{n}$ of $F$ we adopt the usual notation  $ F^{n}(\theta,y ) = (S^{n}\theta , f_{\theta}^{n}(y))$  where $f^{n}_{\theta} = f_{S^{n-1}(\theta)} \circ \cdots \circ f_{\theta}$  . Hence  $\f^{n+k}(y) = f_{S^k \theta}^n (f^k_\theta(y))$. For $n = 1$ and $k =-1$ this includes the identity $\f^{-1}(y) = (f_{S^{-1}\theta})^{-1}(y)$.\\

\subsection{Bounding graphs, Lyapunov exponents, and pinch points}
In this subsection, we introduce the concepts and the notations which are
 basic for the study in the following sections.\\
Invariant graphs are fundamental objects in the study of skew product systems, and they are of
major interest.
\begin{definition}[Invariant graph]\label{def:graph}

Let $F \in \mathcal{F}$. A measurable function $\phi : \Theta \to \I$ is called an invariant graph (with respect to $F$) if for all $\theta \in \Theta$:
$$ F(\theta , \phi(\theta)) = (S \theta , \phi(S \theta )) ,    \text{ equivalently} \quad f_{\theta}(\phi(\theta)) = \phi(S\theta)  .
$$
The point set $\Phi := \{ (\theta , \phi(\theta))  : \theta \in \Theta \}$ will be called invariant graph as well, but it is labeled with the corresponding capital letter. Denote by $cl(\Phi)$ the closure of $\Phi$ in $\Theta \times \I$, and by $[\Phi]$ its filled-in closure in $\Theta \times \I$, i.e.
$$[\Phi]= \Big\{ (\theta,y) \in \Theta \times \I : \exists y_{1},y_{2} \in \I   \text{ s.t } (\theta,y_{1}), (\theta,y_{2}) \in cl(\Phi) , y_{1} \leq y \leq y_{2} \Big\} .$$
\end{definition}

\begin{definition}
Let $\phi^{\pm}_{n}(\theta) := f^{n}_{S^{-n}\theta} (\pm M)$ for $\theta \in \Theta$ and $n \in \N$. Define
$$\phi^{\pm}(\theta) := \lim_{n \rightarrow \infty} \phi^{\pm}_{n}(\theta).$$
We call $\pp , \pn$ the upper and the lower bounding invariant graph, respectively. In order to simplify notations put:  $\k_{n}(\theta) := [\phi_{n}^{-}(\theta), \phi_{n}^{+}(\theta)] $ and   $\k(\theta) := [\phi^{-}(\theta), \phi^{+}(\theta)]$.
\end{definition}

The limit exists and is measurable, because  $\phi_{n}^{-}(\theta) \nearrow \phi^{-}(\theta)$ and $ \phi_{n}^{+}(\theta) \searrow \phi^{+}(\theta)$ in view of the monotonicity of fibre maps (the invariance follows from the monotonicity and
continuity of the fibre maps). If the fibre maps are not monotone one can still define an upper and a lower bounding graph, but
then these graphs no longer have to be invariant (see \cite{G} for details).\par
Denote by $\m (\Theta)$ the set of all $S$-invariant probability measures on $\Theta$, and by $\mathcal{E}_S(\Theta)$ the ergodic ones among them. Let $\mu \in \m (\Theta)$. For any invariant graph $\phi$ denote $\mu_{\phi} := \mu \circ (\id_{\Theta} , \phi)^{-1}$. Since $\phi$ is an invariant graph, it is obvious that the new measure $\mu_{\phi}$ is an $F$-invariant measure.\par
In the investigation of skew product systems,
attracting invariant graphs are often useful characteristics, which we simply call attractors. Formally, we introduce the following definition. Note that the effect of the attraction is observed only
in the fibre space $\I$.
\begin{definition}
A point $(\theta,y) \in \Theta \times \I$ is attracting, if there is a constant $\delta >0$ such that
$$\lim_{n\to \infty}|\f^n(y)-\f^n(z)| = 0,$$
for all $z \in (y-\delta,y+\delta)$.
Furthermore, an invariant graph $\phi$ is called an attractor with
respect to the invariant probability $\mu$ if $\mu$-almost every point is attracting.
\end{definition}
\begin{definition}[Global attractor]\label{def:att}
Let $F \in \mathcal{F}$. Then the global attractor of this system is defined by
$$\GA= \bigcap_{n\in \N}F^{n}(\Theta \times \I).$$
\end{definition}
\begin{remark}
Note that the $\pn$ and $\pp$ are the functions representing the lower and upper bounding graphs of the set $\GA$. In particular, these are the same objects as what are called the bounding
graphs in \cite{J1}. Since any invariant graph can only exist inside GA, the bounding graphs $\pn$ and
$\pp$ are the minimal and maximal invariant graphs, respectively.
\end{remark}
\begin{remark}
If $\Theta$ is a compact metric space and in case that $S$ is a homeomorphism, it is evident that $\pn, \pp$ are lower and upper semi-continuous,
respectively, by construction, and the global attractor $\GA$ is compact.
\end{remark}

There is a third relevant invariant graph $\ps$ that satisfies $\pn \leq \ps \leq \pp$. To see the existence of such an object, we start to consider the average Lyapunov exponents in the
fibre direction.

\begin{definition}[Lyapunov exponent]\label{def:lypunov}
Let $F\in \mathcal{F}$ and $(\theta,y)\in \Theta \times \I$ .  If the limit
$$\lambda (\theta,y) := \lim_{n\to \infty} \frac{1}{n} \log (f_{\theta}^{n})'(y), $$
exists, it is called the normal Lyapunov exponent  of $F$ in $(\theta,y)$.
Furthermore, if $\mu \in \mathcal{E}_S(\Theta)$ and $\phi$ is a $\mu$-a.e. invariant graph with $\log f'_{\theta}(\phi(\theta)) \in \mathcal{L}^{1}_{\mu}$, then its (fibre) Lyapunov exponent w.r.t. $\mu$ is defined as
$$\lambda_{\mu}(\phi) := \int_{\Theta} \log f'_{\theta}(\phi(\theta)) d\mu(\theta).$$
\end{definition}
Note that by the Birkhoff ergodic theorem 
\begin{equation*}
\begin{aligned}
\lambda(\theta,\phi(\theta)) 
&= \lim_{n\to \infty} \frac{1}{n} \log (f_{\theta}^{n})'(\phi(\theta)) = \lim_{n\to \infty} \frac{1}{n} \sum_{k=0}^{n-1}\log f_{S^{k}\theta}'(f_{\theta}^{k}(\phi(\theta))\\
&= \lim_{n \to \infty}\frac{1}{n} \sum_{k=0}^{n-1}\log f_{S^{k}\theta}'(\phi(S^k\theta))
= \int_{\Theta} \log f_{\theta}'(\phi(\theta))d\mu (\theta)  \\
&=\lambda_{\mu}(\phi)
\end{aligned}
\end{equation*}
for $\mu$-a.e.~$\theta\in\Theta$.
So the average Lyapunov exponent of an invariant graph equals its point-wise Lyapunov exponent for $\mu$-a.e. $\theta\in \Theta$ .

The following proposition is a slight modification of some results in \cite{J1}, which describes all possible scenarios
in our skew product system in terms of the invariant graphs and the fibre Lyapunov exponents.
Its proof is based on the striking property of functions with negative Schwarzian derivatives that
the cross ratio distortion increases, if they are applied, (see \cite{MS} for the details of the cross ratio distortion). In addition, the reason that the Lyapunov
exponents of the lower and the upper bounding graphs are always non-positive is our compressing
assumption (Remark\ref{rem:com})
.

\begin{proposition}
$\phi^{-}$, $\phi^{+}$ are measurable invariant graphs defined everywhere as pullback-limits and $\GA = \{ (\theta,y) \in \Theta \times \I : \phi^{-}(\theta)  \leq y \leq \phi^{+}(\theta)\}.$ Furthermore, there is a measurable graph $\phi^{*} $ defined everywhere, satisfying $\phi^{-} \leq \phi^{*} \leq \phi^{+}$ and such that for $\mu \in \cE_S(\Theta)$ exactly one of the following three cases occurs:\begin{enumerate}[(1)]
\item $\phi^{-}=\phi^{*}=
\phi^{+}$   $\mu$-a.s. and $\lambda_{\mu}(\phi^{\pm}) \leq 0$.
\item
$ \phi^{-}=\phi^{*}<\phi^{+}$ $\mu$-a.s. and $\lambda_{\mu}(\phi^{+})<0=\lambda_{\mu}(\phi^{-})$
 or
$\phi^{-}<\phi^{*}=\phi^{+}$ $\mu$-a.s. and
$\lambda_{\mu}(\phi^{-})<0=\lambda_{\mu}(\phi^{+})$.
\item
$\phi^{-}<\phi^{*}<\phi^{+}$ $\mu$-a.s., $\lambda_{\mu}(
\phi^{-})<0,\lambda_{\mu}(\phi^{+})<0$, $\lambda_{\mu}(\phi^{*})>0$, and $\phi^{*}$ is $\mu$ a.s. invariant.
\end{enumerate}
In all three cases hold:\begin{itemize}
    \item
Each graph which is $\mu-a.s.$ invariant equals $\pn, \pp$  or $ \phi^{*}$ $\mu$-a.s. In particular, $\mu_{\pn}, \mu_{\pp}$ and $\mu_{\phi^{*}}$ are the only $F$-invariant ergodic probability measures that project to $\mu$.
\item
For $\mu$-a.e. $\theta \in \Theta$ and every $y \in [\pn(\theta), \pp(\theta)] \setminus \{\phi^{*}(\theta) \}$ \end{itemize}
\begin{equation}
\begin{split}
\lim_{n\to \infty}|\f^{n}(y) - \pn(S^{n}\theta)| &= 0    \text{ if } y<\phi^{*}(\theta),\\
\lim_{n\to \infty}|\f^{n}(y) - \pp(S^{n}\theta)| &= 0    \text{ if } y>\phi^{*}(\theta).
\end{split}
\end{equation}
\end{proposition}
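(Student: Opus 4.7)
The plan is to adapt J\"ager's three--pillar strategy from \cite{J1} to our hyperbolic setting and break the proof into three steps.

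First, I would establish the pullback construction of $\pn$ and $\pp$ and identify $\GA$. Monotonicity of $\f$ together with the compressing assumption (Remark~\ref{rem:com}) force $\phi_n^-(\theta)$ to be increasing in $n$ and $\phi_n^+(\theta)$ to be decreasing in $n$, both uniformly bounded in $\I$; they therefore converge everywhere, and passage to the limit in the identity $f_\theta(\phi_n^\pm(\theta))=\phi_{n+1}^\pm(S\theta)$ yields invariance. Any invariant graph $\phi$ satisfies $\phi(\theta)=f_\theta^n(\phi(S^{-n}\theta))\in[\phi_n^-(\theta),\phi_n^+(\theta)]$ for every $n$, so $\pn\leq\phi\leq\pp$; the same sandwich identifies $\GA$ with $\{(\theta,y):\pn(\theta)\leq y\leq\pp(\theta)\}$. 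The bound $\lambda_\mu(\ppn)\leq 0$ then follows from compressing: a strictly positive exponent would, via the mean value theorem and Birkhoff's ergodic theorem, produce orbits inside $\I$ separating exponentially from $\ppn$, contradicting the finite diameter of $\I$.

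Second---and this is where I expect the main difficulty---I would construct $\ps$ using the negative Schwarzian derivative, which is preserved under composition. The key fact from \cite{MS} is that $(f_\theta^n)'$ admits no positive interior local minimum, hence on each fibre $[\pn(\theta),\pp(\theta)]$ it is either monotone or attains a unique interior maximum at a point $\psi_n(\theta)$. The cross--ratio inequality for Schwarzian maps---that they expand the cross ratio of four ordered points---then forces $\psi_n(\theta)$ to converge to a limit $\ps(\theta)\in[\pn(\theta),\pp(\theta)]$ and simultaneously rules out the existence of a fourth invariant graph, so $\{\pn,\ps,\pp\}$ exhausts them. The technical heart lies in verifying this convergence and ensuring $\psi_n(\theta)$ does not drift to the boundary in the limit; once done, continuity of $\f$ yields the invariance identity $\f(\ps(\theta))=\ps(S\theta)$.

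Finally, the trichotomy is a short ergodic argument: by ergodicity of $\mu$, each of the $S$--invariant sets $\{\pn=\pp\}$, $\{\pn=\ps<\pp\}$, $\{\pn<\ps=\pp\}$, $\{\pn<\ps<\pp\}$ has $\mu$--measure $0$ or $1$, and exactly one has measure $1$. The Lyapunov signs in each case come again from the cross--ratio estimate: in case~(3), $\ps$ is fibre--wise repelling, giving $\lambda_\mu(\ps)>0$, while $\pn,\pp$ are attracting, giving $\lambda_\mu(\ppn)<0$; in case~(2), the graph coinciding with $\ps$ must carry the borderline exponent $0$, since it is both attracting and a pullback limit of the boundary. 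Uniqueness of $F$--invariant ergodic probabilities projecting to $\mu$ is then immediate, as any such measure is some $\mu_\phi$ with $\phi\in\{\pn,\ps,\pp\}$. The final convergence assertion follows from the attracting/repelling structure and the fact that $\ps$ separates the basins of $\pn$ and $\pp$ within each fibre.
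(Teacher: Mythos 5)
Your outline follows the same route that the paper itself indicates: the paper gives no detailed proof of this proposition, presenting it as a slight modification of results in \cite{J1} and naming exactly the two ingredients you invoke --- the cross-ratio/minimum principle for negative-Schwarzian maps (to bound the number of invariant graphs and produce the trichotomy) and the compressing assumption (to force $\lambda_{\mu}(\ppn)\leq 0$). Your first and third steps (monotone pullback limits, identification of $\GA$, the $0$--$1$ law from ergodicity of $\mu$, and the assignment of exponent signs in each case) are the standard argument and are at least as detailed as what the paper supplies.

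The genuine gap is in your second step, the construction of $\ps$. Defining $\psi_n(\theta)$ as the interior maximiser of $(f_\theta^n)'$ on $[\pn(\theta),\pp(\theta)]$ and asserting that the cross-ratio inequality ``forces'' $\psi_n(\theta)$ to converge is not an argument: the maximiser need not exist (the derivative may be monotone on the fibre interval, as it typically is in cases (1) and (2)), and even in case (3) there is no direct route from cross-ratio expansion to convergence of an argmax without already knowing where the repelling graph sits --- which is circular. The construction in \cite{J1} avoids this entirely: one defines $\ps(\theta)$ as the basin boundary, e.g. $\ps(\theta):=\sup\{y\in[\pn(\theta),\pp(\theta)]:\lim_{n\to\infty}|\f^n(y)-\pn(S^n\theta)|=0\}$, which is measurable and invariant by construction; the negative Schwarzian derivative is then used only to show that at most one graph strictly between $\pn$ and $\pp$ can be invariant and that it must be repelling, and the final convergence assertion comes for free. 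You should restructure the second step along these lines, or else actually prove convergence of $\psi_n$. A smaller soft spot: your justification of $\lambda_{\mu}(\ppn)\leq 0$ via ``exponential separation contradicts the finite diameter of $\I$'' does not follow from the mean value theorem and Birkhoff alone, since a positive derivative exponent along the graph is an infinitesimal statement; one again needs the minimum principle to convert it into macroscopic expansion of the interval between $\pp(\theta)$ and $M$ (where compressing guarantees a definite gap), so the Schwarzian hypothesis enters this step too.
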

\begin{corollary}
For $\mu$-a.e. $\theta \in \Theta$ and every $y\in \I$,
\[
\lambda(\theta,y)=\lim_{n\to\infty} \frac{1}{n} \log(\f^{n})'(y)=
\begin{cases}
\lambda_{\mu}(\pn) \quad &\text{ if } y<\ps(\theta)\\
\lambda_{\mu}(\ps)  \quad &\text{ if } y=\ps(\theta)\\
\lambda_{\mu}(\pp)   \quad &\text{ if } y>\ps(\theta).
\end{cases}
\]
\end{corollary}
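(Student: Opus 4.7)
The plan is to reduce the corollary to the preceding proposition via a standard Birkhoff/Cesàro argument.

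First, I would treat the boundary case $y=\ps(\theta)$. Since in every scenario of the proposition $\ps$ is $\mu$-a.s.~invariant (it coincides with $\pn$ or $\pp$ in cases (1)--(2), and is declared invariant in case (3)), the Birkhoff decomposition already displayed in the excerpt just before the proposition applies verbatim with $\phi=\ps$, giving $\lambda(\theta,\ps(\theta))=\lambda_\mu(\ps)$ for $\mu$-a.e.~$\theta$.

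Next, for $y\in[\pn(\theta),\pp(\theta)]\setminus\{\ps(\theta)\}$, the proposition supplies the key dynamical fact that, for $\mu$-a.e.~$\theta$, $|\f^n(y)-\ppn(S^n\theta)|\to 0$ (with sign chosen by the side of $\ps(\theta)$). I would then write
\[
\frac{1}{n}\log(\f^n)'(y)=\frac{1}{n}\sum_{k=0}^{n-1}\log f'_{S^k\theta}(\f^k(y)),
\]
and compare this with the analogous sum along the orbit of $\ppn$. By the standing assumption $\inf f'_\theta>0$ together with $f_\theta\in \CCC$ uniformly in $\theta$, the map $y\mapsto \log f'_\theta(y)$ is uniformly (in $\theta$) continuous on $\I$; hence $\log f'_{S^k\theta}(\f^k(y))-\log f'_{S^k\theta}(\ppn(S^k\theta))\to 0$ as $k\to\infty$, and a Cesàro argument eliminates this difference in the average. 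Finally, Birkhoff's theorem, applied to the bounded (hence $\mathcal{L}^1_\mu$) observable $\theta\mapsto\log f'_\theta(\ppn(\theta))$, gives
\[
\frac{1}{n}\sum_{k=0}^{n-1}\log f'_{S^k\theta}(\ppn(S^k\theta))\;\longrightarrow\;\lambda_\mu(\ppn),
\]
for $\mu$-a.e.~$\theta$. Putting these together yields $\lambda(\theta,y)=\lambda_\mu(\ppn)$.

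The only remaining step is extending the argument to $y\in \I\setminus[\pn(\theta),\pp(\theta)]$. Once one knows $|\f^n(y)-\ppn(S^n\theta)|\to 0$ in this case as well, the Cesàro/Birkhoff machinery above applies word-for-word; so everything reduces to establishing this attraction. Monotonicity of the fibre maps sandwiches $\f^n(y)$ between $\ppn(S^n\theta)$ and $\phi^{\pm}_n(S^n\theta)=f^n_\theta(\pm M)$, so the task is to show the outer sandwich closes up along the orbit. This is the step I expect to be the main obstacle, because the pointwise monotone convergence $\phi^\pm_n\searrow\pp$, $\phi^-_n\nearrow\pn$ is not a priori uniform and $S^n\theta$ moves. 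I would handle it by invoking the negative-Schwarzian cross-ratio inequality on the four ordered points $-M,\pn(\theta),\pp(\theta),M$: after $n$ iterations the cross-ratio of their images cannot decrease, while the middle interval $[\pn(S^n\theta),\pp(S^n\theta)]$ stays of bounded length, forcing the outer intervals $[\phi^-_n(S^n\theta),\pn(S^n\theta)]$ and $[\pp(S^n\theta),\phi^+_n(S^n\theta)]$ to shrink to zero along the orbit. Combined with the compressing property (Remark~\ref{rem:com}), this yields the desired attraction, and the proof is complete.
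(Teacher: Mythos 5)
Your overall strategy is the natural one, and it is essentially what the paper intends: the corollary is stated there without proof, as an immediate consequence of the preceding proposition. Your treatment of the case $y=\ps(\theta)$ (via the Birkhoff computation displayed before the proposition, noting that $\ps$ is $\mu$-a.s.\ invariant in all three cases) and of the case $y\in[\pn(\theta),\pp(\theta)]\setminus\{\ps(\theta)\}$ (attraction from the proposition, uniform continuity and boundedness of $(\theta,y)\mapsto\log\f'(y)$, Ces\`aro argument) is correct. You are also right to single out the extension to $y\notin[\pn(\theta),\pp(\theta)]$ as the one point that is not literally covered by the proposition: by monotonicity $\phi^-_n(S^n\theta)=\f^n(-M)\le \f^n(y)\le \pn(S^n\theta)$ for $y\le\pn(\theta)$, and the pointwise monotone convergence $\phi^-_n\nearrow\pn$ is being evaluated at the moving point $S^n\theta$, so something must be said.

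The cross-ratio argument you propose for this last step does not close, however. The negative-Schwarzian inequality only asserts that the cross-ratio of the four image points does not decrease; writing $L_n,J_n,R_n,T_n$ for the outer, middle and total intervals at time $n$, it gives $|L_n|\,|R_n|\le \mathrm{const}\cdot|J_n|\,|T_n|\le \mathrm{const}\cdot(2M)^2$, which is merely an upper bound and does not force $|L_n|,|R_n|\to0$: the deduction ``cross-ratio nondecreasing and middle interval bounded $\Rightarrow$ outer intervals shrink'' already fails for the identity map. (The configuration also degenerates when $\pn=\pp$ $\mu$-a.s., as in case (1).) A correct and elementary repair is the following. Set $h_n(\vartheta):=\pn(\vartheta)-\phi^-_n(\vartheta)\ge0$, so that $h_n\searrow 0$ pointwise and, for every $m\ge n$, $\pn(S^m\theta)-\phi^-_m(S^m\theta)=h_m(S^m\theta)\le h_n(S^m\theta)$. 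Applying Birkhoff's theorem to the indicator of $\{h_n>\epsilon\}$ shows that for $\mu$-a.e.\ $\theta$ the set of times $m$ with $h_m(S^m\theta)\le\epsilon$ has density at least $1-\mu\{h_n>\epsilon\}$; letting $n\to\infty$ (so that $\mu\{h_n>\epsilon\}\to0$) and then $\epsilon\to0$ gives $h_m(S^m\theta)\to0$ along a set of times of full density. Since $\log\f'$ is uniformly bounded and uniformly continuous, convergence in density suffices for your Ces\`aro average, and the symmetric argument above $\pp$ completes the proof.
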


The main goal of this paper is to investigate the properties of the pinch points set, which consist of those base points, over which the system synchronizes trajectories with different initial values on the same fibre.
\begin{definition}[pinch points]
Let$$P := \big\{\theta \in \Theta : \pp(\theta) = \pn(\theta)\} = \{ \theta \in \Theta : (\GA)_{\theta} \text{ is a singleton set} \big\},$$where $(\GA)_{\theta} := \big\{y \in \I : (\theta, y) \in \GA \big\}$.
\end{definition}

%\begin{definition}(Local stable fibre)For any constant $C > 0  $ and a suitable $ \alpha \in (0,1) $  define the local stable fibre of $\theta \in \Theta$ as follows : $$ \gamma^{s}_{C}(\theta) := \{ \theta' \in \Theta  : d(S^{-n}\theta' , S^{-n} \theta) \leq C  \alpha^{n}  d(\theta' , \theta) ,\quad \forall n>0 \} . $$\end{definition}

\section{Chaotically driven skew product systems with fibre maps
with negative Schwarzian derivative}
In this section we consider the case of a hyperbolic base transformation $S:\Theta\to\Theta$.  We attempt to compare such a system with fibre maps depending locally on both, stable and unstable coordinates to one with fibre maps depending locally only on the stable coordinate, and to transfer knowledge about the latter system to the original one.
\subsection{Preliminaries}
As we will see later we need to measure how far compositions of branches are from identity. In this regard, let $\mathcal{D}(\I) $ be the space of all increasing $C^{1}$ maps $g : I_{g} \subseteq \I \to \I$ with $g'>0$.
Let $J_{g} := g(I_{g})$.  Then the maps $g : I_{g} \rightarrow J_{g} $  are diffeomorphisms.
\begin{definition} \label{def:meter}
Let $g,h \in \mathcal{D}(\I)$ such that $I_{h} \subseteq I_{g}$.  Define:
\begin{equation*}
\begin{split}
\dzz(g,h)
&:=
\sup \{|g \circ h^{-1} (y) - y| : y \in J_{h} \}\\
&= \sup \{|g(x) - h(x)| : x \in I_{h} \},
\end{split}
\end{equation*}
and
\begin{equation*}
\begin{split}
\dze(g,h) &:= \dzz(g,h) + || g' - h' ||_{\infty}\\
&=\dzz(g,h) + \sup \{ |g'(x)-h'(x)| : x\in I_{h} \}.\\
\end{split}
\end{equation*}
\end{definition}
\begin{remark}
It is easy to see that in general, $\dz$ is not symmetric but, if $I_{g}
= I_{h}$ then $\dz(g,h) = \dz(h,g)$. Also,  $\dz (g , h) =  \dz (g \circ h^{-1}  , \id \mid_{J_{h }}) $.
\end{remark}
We make the following general assumptions to keep the technicalities at a moderate level.
\begin{hypothesis}\label{hypo:lip}
Define: $ f_{\bullet} : \Theta \to \mathcal{D}(\I) $, $\theta \mapsto f_{\theta} $. Assume $f_{\bullet}$ is Lipschitz w.r.t $\de$, i.e there exists a real constant $L$ such that for all $\theta , \theta' \in \Theta$
\begin{equation*}
\begin{split}
\de(f_{\theta},f_{\theta'})&\leq L d(\theta,\theta').
\end{split}
\end{equation*}
Any such $L$ is referred to as a Lipschitz constant for the function $f_{\bullet} $. In view of~(\ref{def:sch}), this implies that $\sup_{y\in \I} |\log \f'(y) - \log f'_{\theta'}(y)| \leq L' d(\theta,\theta')$ for constant $L'>0$ and all $\theta , \theta' \in \Theta$.
\end{hypothesis}

\begin{hypothesis}\label{hypo:factor}
Assume there is a piecewise expanding and piecewise $C^{1+}$ mixing Markov map\footnote{Here and in the sequel $C^{1+}$ means "$\C$ with H\"older continuous derivative" without specifying the H\"older exponent.} $\hs:\mathbb{T}^{1} \to \mathbb{T}^{1}$ with finitely many branches which is a factor of $S^{-1}$, i.e.
$$\hs \circ \Pi = \Pi \circ S^{-1} \text{ for some measurable } \Pi:\Theta \to \mathbb{T}^{1}.$$
More precisely we assume there is an injection $\sigma : \mathbb{T}^{1} \to \Theta$ which is H\"older continuous on monotonicity intervals of $\hs$, which satisfies $\Pi \circ \sigma = \id_{\mathbb{T}^{1}}$ , and which is such that each $\theta \in \Theta$ belongs to the local stable fibre of $\sigma \Pi \theta$ w.r.t $S^{-1}$ in the sense that:
\begin{equation}\label{eq:contr}
\exists C > 0  \ \exists \alpha \in (0,1) \ \forall \theta \in \Theta \ \forall n>0:  \ d(S^{-n}\theta , S^{-n}(\sigma\Pi\theta)) \leq C\alpha^{n}d(\theta,\sigma\Pi\theta).
\end{equation}
\end{hypothesis}

\begin{hypothesis}\label{hypo:Q}
%There is a closed interval $J \subseteq \I $ such that $F(\Tt \times \I) \subseteq \Tt \times \interior{J}$ , $J \subseteq \interior{\I}$.
Let  $\q := \sup_{(\theta,y) \in \Theta\times\I} f'_{\theta}(y)$ and $ \qq := \sup_{(\theta,y)\in\Theta\times\I}|(\log f '_{\theta}(y))'|$. Assume
\begin{eqnarray*}
\alpha \cdot \q < 1 \quad \text{and} \quad \qq < \infty.
\end{eqnarray*}
\end{hypothesis}

In the next example we describe how Anosov surface diffeomorphisms fit the general framework of this paper.
\begin{example}[Anosov surface diffeomorphism]\label{exp:anosov}
Let $\Theta = \Tt$ and let $S:\Tt\to\Tt$ be a $C^2$ Anosov diffeomorphism. Denote by $T_{\theta}\Theta=E^s(\theta)+E^u(\theta)$ the splitting of the tangent bundle over $\theta \in \Theta$ into its stable and
unstable subbundles.  Sinai \cite{Si1, Si2} constructed Markov partitions
for Anosov diffeomorphisms (a simpler treatment can be found in \cite{Bo}). As indicated in the proof of Lemma 3 in \cite{Po}, one can construct a $C^{1+}$ expanding Markov interval map $\hs:\T\to\T$ that is a factor of $S^{-1}$, i.e. $\hs\circ\Pi = \Pi\circ S^{-1}$ with the projection $\Pi:\Tt\to\T$ and  the injection $\sigma:\T\to\Tt$ (see  section 6.3 of \cite{K1} for more details).
\end{example}

\begin{proposition} \label{prop:inv}
If $S$ is continuous, the sets $\clpn$ and $\filpn$ are forward $F$-invariant.
\end{proposition}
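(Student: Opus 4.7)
The plan is to argue in two steps, handling $cl(\Phi^{\pm})$ first and then bootstrapping to $[\Phi^{\pm}]$ using monotonicity.

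First I would verify that $F$ is continuous on $\Theta\times\I$. The hypothesis gives continuity of $S$; combining this with the Lipschitz condition on $\theta\mapsto f_\theta$ in the $d^{(1)}$-metric (Hypothesis~\ref{hypo:lip}) and the fact that each $f_\theta$ is $C^3$ in $y$ with uniformly bounded derivative yields joint continuity of $(\theta,y)\mapsto f_\theta(y)$, hence of $F$. Next, because $\phi^{\pm}$ is an invariant graph, we have the pointwise identity $F(\theta,\phi^{\pm}(\theta))=(S\theta,\phi^{\pm}(S\theta))$, so $F(\Phi^{\pm})\subseteq\Phi^{\pm}$. Applying continuity of $F$ gives
\[
F(cl(\Phi^{\pm}))\subseteq cl(F(\Phi^{\pm}))\subseteq cl(\Phi^{\pm}),
\]
which is the forward invariance of $cl(\Phi^{\pm})$.

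For the filled-in closure I would exploit the monotonicity of the fibre maps. Pick any $(\theta,y)\in[\Phi^{\pm}]$. By definition there exist $y_1\leq y\leq y_2$ with $(\theta,y_1),(\theta,y_2)\in cl(\Phi^{\pm})$. The first step yields $(S\theta,f_\theta(y_1))$ and $(S\theta,f_\theta(y_2))$ in $cl(\Phi^{\pm})$. Since $f_\theta$ is increasing,
\[
f_\theta(y_1)\leq f_\theta(y)\leq f_\theta(y_2),
\]
so $(S\theta,f_\theta(y))$ lies between two points of $cl(\Phi^{\pm})$ over $S\theta$, giving $F(\theta,y)\in[\Phi^{\pm}]$.

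The only subtle point is the first step: we truly need joint continuity of $F$, not merely continuity of $S$ alone, because the closure $cl(\Phi^{\pm})$ may contain points $(\theta,y)$ with $\phi^{\pm}(\theta)\neq y$ (since $\phi^{\pm}$ is only semi-continuous). Once joint continuity is in hand, both statements fall out essentially for free, and the monotonicity of $f_\theta$ makes the passage from $cl(\Phi^{\pm})$ to $[\Phi^{\pm}]$ immediate.
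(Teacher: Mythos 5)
Your proof is correct and takes essentially the same route as the paper's: the paper likewise deduces forward invariance of $cl(\Phi^{\pm})$ from a sequential continuity argument based on the continuity of $S$ together with Hypothesis~1 (joint continuity of $(\theta,y)\mapsto f_\theta(y)$), and then declares the statement for $[\Phi^{\pm}]$ an immediate consequence. Your explicit monotonicity argument for the filled-in closure just spells out that last step.
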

\begin{proof}
Let $(\theta,y)\in \clpn $, then there are $\theta_{n} (n \in \mathbb{N})$ such that $\theta_{n} \mapsto \theta$ and $\phi^{\pm}(\theta_{n})\mapsto y$. According to the continuity of $S$ and Hypothesis \ref{hypo:lip},
$$F(\theta,y) = (S\theta,f_{\theta}(y)) = \lim_{n \mapsto \infty} (S\theta_{n},f_{\theta_{n}}(\phi^{\pm}(\theta_{n}))) =  \lim_{n \mapsto \infty} (S\theta_{n},\phi^{\pm}(S\theta_{n}))) \in \clpn.$$
The forward $F$-invariance of $\filpn$ is an immediate consequence.
\end{proof}

\begin{lemma}\label{lem:id}
Suppose $f,g,h \in \mathcal{D}(\I)$ , $I_{g} \subseteq I_{h} , I_{g} \subseteq I_{f}$ and $J_{h} \subseteq I_{f}$. If, for given $\epsilon , \epsilon' > 0$, we have  $\dzz(h,\id) \leq \epsilon$ and $  \dzz(f \circ g^{-1} , \id) \leq \epsilon'$ then $\dzz (f \circ h \circ g^{-1} , \id ) \leq \epsilon' + Q  \epsilon$,  where $Q= \sup_{y\in I_f}  f'(y)  .$
\end{lemma}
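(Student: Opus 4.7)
The plan is to unfold the definition of $\dzz$ and bound the error $f(h(x))-g(x)$ by splitting it into two pieces, each of which is directly controlled by one of the hypotheses.

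First I would recall that, in view of the second form of $\dzz$ in Definition~\ref{def:meter}, it suffices to show that
\[
|f(h(g^{-1}(y))) - y| \le \epsilon' + Q\,\epsilon \quad\text{for every } y \in J_g.
\]
Writing $x = g^{-1}(y) \in I_g$, this reduces to bounding $|f(h(x)) - g(x)|$ for $x \in I_g$. I would check once at the start that the composition is well-defined on $I_g$: since $I_g \subseteq I_h$ the value $h(x)$ makes sense and lies in $J_h$, which sits inside $I_f$; similarly $I_g \subseteq I_f$ so $f(x)$ is defined. Thus both $x$ and $h(x)$ belong to $I_f$, which will let us apply the mean value theorem to $f$ on the interval joining them.

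Next I would use the triangle inequality
\[
|f(h(x)) - g(x)| \;\le\; |f(h(x)) - f(x)| \;+\; |f(x) - g(x)|,
\]
and handle the two terms separately. For the first term, the mean value theorem on the segment between $x$ and $h(x)$ (which lies in $I_f$) gives $|f(h(x))-f(x)| \le Q\,|h(x)-x|$, and by hypothesis $|h(x)-x| \le \dzz(h,\id) \le \epsilon$, so this term is at most $Q\epsilon$. For the second term, substituting $x = g^{-1}(y)$ turns $|f(x)-g(x)|$ into $|f\circ g^{-1}(y) - y|$, which is bounded by $\dzz(f\circ g^{-1},\id)\le \epsilon'$ by the other hypothesis. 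Adding the two estimates yields the claim.

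There is no real obstacle here; the only point that needs mild care is making sure all the compositions are defined and that the segment between $x$ and $h(x)$ stays inside $I_f$ so the supremum $Q$ genuinely controls $f'$ along that segment. Both follow from the inclusions $I_g \subseteq I_h$, $I_g \subseteq I_f$, $J_h \subseteq I_f$ assumed in the statement, so the proof is essentially a one-line triangle-inequality argument once the definitions are unfolded.
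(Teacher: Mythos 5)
Your proof is correct and follows essentially the same route as the paper's: the paper writes $f(h(x)) = f(x) + f'(\delta)\,(h(x)-x)$ via the mean value theorem and then bounds the two resulting pieces by $\epsilon'$ and $Q\epsilon$, which is exactly your triangle-inequality decomposition into $|f(h(x))-f(x)|$ and $|f(x)-g(x)|$. Your extra care about the compositions being well-defined and the segment between $x$ and $h(x)$ lying in $I_f$ is a point the paper leaves implicit, but there is no substantive difference.
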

\begin{proof}
For every $y \in I_{h}$ , $\vert h(y) - y \vert \leq \epsilon$. Let $\Delta(y) = h(y) - y$, then, by the mean value theorem, for every $z \in J_{g}$ there is $\delta \in I_f$ such that
$$\vert f \circ h \circ g^{-1} (z) - z\vert =\vert f(g^{-1}(z)) + f'(\delta) \cdot \Delta(g^{-1}(z)) - z \vert \leq \epsilon' + Q  \epsilon ,$$
where $Q = \sup_{y\in I_f}  f'(y)  .$
By Definition \ref{def:meter},
 $$\dzz (f \circ h \circ g^{-1} , \id ) = \sup \{|f \circ h \circ g^{-1} (z) - z| : z \in J_{g} \} \leq \epsilon' + Q \epsilon.$$
\end{proof}
\begin{lemma}\label{lem:id2}
Suppose $h,h_{i,R},h_{i,L}\in\mathcal{D}(\I),i=1,\cdots,k$; $I_{h_{i,R}}\subseteq J_{h_{i-1,R}},i = 2,\cdots,k$; $J_{h_{i,L}} \subseteq I_{h_{i+1,L}}, i = 1,\cdots,k-1 $ ; $ I_{h_{1,R}} \subseteq I_{h} $ and $ J_{h} \subseteq I_{h_{1,L}} $. For given $\epsilon_{0} , \epsilon_{1} , \epsilon_{2} , \cdots , \epsilon_{k} $, if we have  $\dzz(h_{i,L} , h_{i,R}) \leq \epsilon_{i} , i = 1, \cdots , k $ and $\dzz(h,\id) \leq \epsilon_{0 }$,
then $$ \dzz(h_{k,L}  \cdots \circ h_{1,L}  \circ h \circ h_{1,R}^{-1} \cdots \circ h_{k,R}^{-1} , \id) \leq \epsilon_{k} + \sum_{j=0}^{k-1}(\prod_{i=j+1}^{k} Q_{i}) \epsilon_{j} ,  $$
where $ Q_{i} = \sup_{y\in I_{h_{i,L}}} h'_{i,L} (y) $ for $i= 1 , \cdots , k$.
\end{lemma}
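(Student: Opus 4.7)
The plan is to prove Lemma \ref{lem:id2} by induction on $k$, with Lemma \ref{lem:id} as the engine at each step. The statement and the inductive bound are already in the form that is compatible with peeling off one outer pair $(h_{k,L}, h_{k,R})$, so the main task is to verify that the domain conditions required by Lemma \ref{lem:id} are satisfied at each stage and then to chase the resulting arithmetic.

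First I would establish the base case $k=1$ by direct application of Lemma \ref{lem:id} with $f=h_{1,L}$, $g=h_{1,R}$, and the inner map being $h$. The hypothesis $\dzz(h_{1,L},h_{1,R})\leq \epsilon_1$ gives $\dzz(h_{1,L}\circ h_{1,R}^{-1},\id)\leq \epsilon_1$ (using the identity in the remark after Definition \ref{def:meter}), while $\dzz(h,\id)\leq \epsilon_0$ is assumed. The required inclusions $I_{h_{1,R}}\subseteq I_h$ and $J_h\subseteq I_{h_{1,L}}$ are part of the hypothesis, and $I_{h_{1,R}}\subseteq I_{h_{1,L}}$ is implicit in $\dzz(h_{1,L},h_{1,R})$ being well-defined (cf.\ Definition \ref{def:meter}, which requires $I_{h_{1,R}}\subseteq I_{h_{1,L}}$). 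This yields $\dzz(h_{1,L}\circ h\circ h_{1,R}^{-1},\id)\leq \epsilon_1+Q_1\epsilon_0$, matching the claim.

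For the inductive step, assume the result for $k-1$ and set
\[
H:=h_{k-1,L}\circ\cdots\circ h_{1,L}\circ h\circ h_{1,R}^{-1}\circ\cdots\circ h_{k-1,R}^{-1},
\]
so that the inductive hypothesis gives $\dzz(H,\id)\leq \epsilon_{k-1}+\sum_{j=0}^{k-2}\bigl(\prod_{i=j+1}^{k-1}Q_i\bigr)\epsilon_j$. I would then apply Lemma \ref{lem:id} to $f=h_{k,L}$, $g=h_{k,R}$, and the inner map $H$: using $\dzz(h_{k,L}\circ h_{k,R}^{-1},\id)\leq \epsilon_k$ together with the inductive bound on $\dzz(H,\id)$ yields
\[
\dzz(h_{k,L}\circ H\circ h_{k,R}^{-1},\id)\leq \epsilon_k+Q_k\!\left(\epsilon_{k-1}+\sum_{j=0}^{k-2}\Bigl(\prod_{i=j+1}^{k-1}Q_i\Bigr)\epsilon_j\right),
\]
which simplifies by absorbing $Q_k$ into the products and merging the $Q_k\epsilon_{k-1}$ term into the sum, giving exactly $\epsilon_k+\sum_{j=0}^{k-1}\bigl(\prod_{i=j+1}^{k}Q_i\bigr)\epsilon_j$.

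The only genuinely delicate step is the bookkeeping on domains at each inductive stage. I would explicitly verify that $I_H=J_{h_{k-1,R}}$ and $J_H\subseteq J_{h_{k-1,L}}$, so that the three hypotheses of Lemma \ref{lem:id} hold: $I_{h_{k,R}}\subseteq I_H$ follows from $I_{h_{k,R}}\subseteq J_{h_{k-1,R}}$; $J_H\subseteq I_{h_{k,L}}$ follows from $J_{h_{k-1,L}}\subseteq I_{h_{k,L}}$; and $I_{h_{k,R}}\subseteq I_{h_{k,L}}$ is again implicit in $\dzz(h_{k,L},h_{k,R})$ being well-defined. I expect this chain of inclusions to be the main (purely bookkeeping) obstacle, whereas the analytic content is entirely carried by Lemma \ref{lem:id}.
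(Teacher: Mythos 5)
Your proof is correct and follows essentially the same route as the paper's: induction on $k$, with the base case and the inductive step both handled by applying Lemma \ref{lem:id} to the outermost pair $(h_{k,L},h_{k,R})$ and then absorbing $Q_k$ into the products. Your explicit verification of the domain inclusions at each stage is more careful than what the paper writes out, but the argument is the same.
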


\begin{proof}
For $k=1$ the result is obvious according to the previous lemma:
$$ \dzz(h_{1,L} \circ h \circ  h_{1,R}^{-1} , \id) \leq  \epsilon_{1} + Q_{1} \cdot \epsilon_{0} .$$
By induction we assume the result is correct for $k-1$. Then for $k$ we have:
\begin{equation*}
\begin{aligned}
&\dzz(h_{k,L}  \circ h_{k-1,L} \circ \cdots \circ h \circ \cdots \circ h_{k-1,R}^{-1} \circ h_{k,R}^{-1} , \id) \\
&\quad\leq  \epsilon_{k} +Q_{k}  (\epsilon_{k-1}+\sum_{j=0}^{k-2}(\prod_{i=j+1}^{k-1} Q_{i}) \epsilon_{j})
= \epsilon_{k} + \sum_{j=0}^{k-1}(\prod_{i=j+1}^{k} Q_{i}) \epsilon_{j}.
\end{aligned}
\end{equation*}
\end{proof}
\begin{lemma}\label{coro:id}
 Let $\theta \in \Theta$ and $\theta'$ belong to the local stable fibre of $\theta \in \Theta$, in the sense of (\ref{eq:contr}). For arbitrary $n,k$ we have
$$\dzz(f_{S^{-n}\theta}^{k} \circ (f_{S^{-n}\theta'}^{k})^{-1} , \id) \leq C'\alpha^{n-k}d(\theta,\theta') ,$$
where $C'=\frac{\alpha CL}{1-\alpha \q}$.
\end{lemma}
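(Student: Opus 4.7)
The plan is to apply Lemma \ref{lem:id2} with the trivial choice $h = \id$ (so $\epsilon_0 = 0$) and then estimate the resulting sum as a geometric series using Hypotheses \ref{hypo:lip}--\ref{hypo:Q}.

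First I would write out the expansion
\begin{equation*}
f_{S^{-n}\theta}^{k} \circ (f_{S^{-n}\theta'}^{k})^{-1}
=
f_{S^{k-1-n}\theta} \circ \cdots \circ f_{S^{-n}\theta}
\circ
f_{S^{-n}\theta'}^{-1} \circ \cdots \circ f_{S^{k-1-n}\theta'}^{-1},
\end{equation*}
and match this with the format of Lemma \ref{lem:id2} by taking $h_{i,L} = f_{S^{i-1-n}\theta}$ and $h_{i,R} = f_{S^{i-1-n}\theta'}$ for $i=1,\dots,k$, together with $h = \id$. The domain/range inclusions required by Lemma \ref{lem:id2} hold because every $f_{\theta}$ maps $\I$ into $\interior{\I}$ (Remark \ref{rem:com}).

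Next I would control the two ingredients that feed into the bound of Lemma~\ref{lem:id2}. For the $\epsilon_i$: by Hypothesis \ref{hypo:lip},
\begin{equation*}
\epsilon_i = \dzz(f_{S^{i-1-n}\theta}, f_{S^{i-1-n}\theta'})
\leq L \, d(S^{i-1-n}\theta, S^{i-1-n}\theta'),
\end{equation*}
and since $\theta'$ lies in the local stable fibre of $\theta$, (\ref{eq:contr}) gives
\begin{equation*}
d(S^{-(n-i+1)}\theta, S^{-(n-i+1)}\theta')
\leq C \alpha^{n-i+1} d(\theta,\theta'),
\end{equation*}
so $\epsilon_i \leq LC\alpha^{n-i+1} d(\theta,\theta')$. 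For the $Q_i$: Hypothesis \ref{hypo:Q} gives $Q_i = \sup_{y} f'_{S^{i-1-n}\theta}(y) \leq \q$.

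Plugging into Lemma~\ref{lem:id2} (with $\epsilon_0=0$) and reindexing $m=k-j$ gives
\begin{equation*}
\dzz(f_{S^{-n}\theta}^{k} \circ (f_{S^{-n}\theta'}^{k})^{-1}, \id)
\leq \epsilon_k + \sum_{j=1}^{k-1} \q^{\,k-j} \epsilon_j
\leq LC\alpha \cdot \alpha^{n-k} d(\theta,\theta') \sum_{m=0}^{k-1} (\alpha \q)^{m}.
\end{equation*}
Hypothesis \ref{hypo:Q} gives $\alpha \q < 1$, so the geometric series is bounded by $(1-\alpha \q)^{-1}$, yielding the claimed estimate with $C' = \frac{\alpha C L}{1-\alpha \q}$.

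There is no serious obstacle; the only delicate points are the index bookkeeping in identifying the correct backward iteration $n-i+1$ for each fibre map (so that the contraction estimate (\ref{eq:contr}) produces the exponent $n-k$ in the final bound after collecting the geometric sum), and verifying that $\alpha \q < 1$ is exactly what makes the series converge.
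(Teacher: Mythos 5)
Your proposal is correct and follows essentially the same route as the paper's proof: both apply Lemma \ref{lem:id2} with $h=\id$, $h_{i,L}=f_{S^{-(n-i+1)}\theta}$, $h_{i,R}=f_{S^{-(n-i+1)}\theta'}$, bound $\epsilon_i\leq LC\alpha^{n+1-i}d(\theta,\theta')$ via Hypotheses \ref{hypo:lip} and \ref{hypo:factor}, and sum the resulting geometric series using $\alpha\q<1$ to obtain $C'=\frac{\alpha CL}{1-\alpha\q}$. The index bookkeeping and the final constant match the paper exactly.
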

\begin{proof}
Since $\theta \to f_{\theta}$ is Lipschitz,
$$\dzz(f_{S^{-(n+1-i)}\theta} , f_{S^{-(n+1-i)}\theta'}) \leq Ld(S^{-(n+1-i)}\theta,S^{-(n+1-i)}\theta') \text{ for every } i=1,\cdots,k.$$
Since $\theta,\theta'$ are in the same local stable fibre, by Hypothesis \ref{hypo:factor}
$$d(S^{-(n+1-i)}\theta,S^{-(n+1-i)}\theta') \leq C \alpha^{n+1-i} d(\theta,\theta')\text{ for every } i=1,\cdots,k .$$
In order to apply Lemma \ref{lem:id2}, for every   $i = 1, \cdots , k$ put:\\
\begin{equation*}
\begin{split}
h&= \id ,
\\
 h_{i,L}&= f_{S^{-(n-i+1)}(\theta)} , \\
 h_{i,R}&= f_{S^{-(n-i+1)}(\theta')},\\
\epsilon_{i} &= C \cdot L \cdot d(\theta , \theta') \cdot \alpha^{n+1-i},\\
\epsilon_{0} &= 0.
\end{split}
\end{equation*}
On $K_k(S^{-(n-k)}\theta)=[\phi_{k}^{-}(S^{-(n-k)}\theta') , \phi_{k}^{+}(S^{-(n-k)}\theta')]$,
\begin{equation*}
\begin{split}
&\dzz(f^{k}_{S^{-n}\theta} \circ (f^{k}_{S^{-n}\theta'})^{-1} , \id )\\
&= \dzz( f_{S^{-(n-k+1)}\theta}
\circ  \dots \circ  f_{S^{-n}\theta} \circ (f_{S^{-n}\theta'})^{-1} \circ \dots \circ (f_{
S^{-(n-k+1)}\theta'})^{-1} , \id )\\
&\leq \sum_{j=0}^{k}  \q^{k-j} \epsilon_{j} = C \cdot L \cdot d(\theta , \theta') \cdot \sum_{j=1}^{k} \alpha^{n+1-j} \cdot \q^{k-j} \\
& \leq C \cdot L \cdot d(\theta , \theta') \cdot \alpha^{n-k+1} \cdot \sum_{i=0}^{\infty} (\alpha \q)^{i}\\
&= C'  \cdot d(\theta , \theta') \cdot \alpha^{n-k},
\end{split}
\end{equation*}
where $C'=\frac{\alpha CL}{1-\alpha \q}$.
\end{proof}
\subsection{Main results}

In the following proposition we introduce a
family of partial diffeomorphisms $\g$ between subintervals of $\I$. In Theorem \ref{Lem:f} we prove that it plays the role of a conjugacy between the old and the new system restricted to their respective global attractors. Throughout we assume that Hypotheses 1--3 are satisfied.
\begin{proposition}\label{theo:g}
 For each $\theta \in \Theta$ and each $\theta'$ belonging to the local stable fibre of $\theta \in \Theta$ in the sense of (\ref{eq:contr}), let
\begin{equation}\label{eq:Gn-def}
 G_{\theta, \theta' , n} : \k_{n}(\theta) \to \k_{n}(\theta') ,\quad
 y \mapsto f^{n}_{S^{-n}\theta'} \circ (f^{n}_{S^{-n}\theta})^{-1} (y).
\end{equation}
Then the limit
\begin{eqnarray}\label{eq:G2}
G_{\theta , \theta'} := \lim_{n\rightarrow \infty} G_{\theta,\theta',n}
\end{eqnarray}
 exists uniformly on $\k(\theta)$, where $\k_{n}(\theta)=[\phi_{n}^- (\theta)$, $ \phi_{n}^{+}(\theta)]$ and   $\k(\theta)= [\phi^{-}(\theta), \phi^{+}(\theta)]$.
 Moreover, $G_{\theta , \theta'}(\k(\theta))=\k(\theta')$, and there is a continuous function $H_{\theta,\theta'}:\k(\theta)\to\R$ such that 
$\lim_{n\to\infty}G'_{\theta,\theta',n}=H_{\theta,\theta'}$ uniformly on $\k(\theta)$, $G'_{\theta,\theta'}=H_{\theta,\theta'}$ on $\interior{\k(\theta)}$,
and this equality extends to the one-sided derivatives of $G_{\theta,\theta'}$ at
the endpoints of $\k(\theta)$. In this sense, $G_{\theta,\theta'}:\k(\theta)\to \k(\theta')$ is an orientation preserving diffeomorphism whenever $\theta\not\in P$.
\end{proposition}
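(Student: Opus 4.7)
The plan is to establish all four assertions by controlling the differences $G_{\theta,\theta',n+1}-G_{\theta,\theta',n}$ and $\log G'_{\theta,\theta',n+1}-\log G'_{\theta,\theta',n}$, exploiting Hypotheses~\ref{hypo:lip}--\ref{hypo:Q} and in particular the contraction condition $\alpha\q<1$. For the uniform convergence of $G_{\theta,\theta',n}$ on $\k(\theta)$, fix $y\in\k(\theta)$ and put $w_{m+1}:=(f^{m+1}_{S^{-(m+1)}\theta})^{-1}(y)$. Using the identity $f_{S^{-(m+1)}\theta}(w_{m+1})=(f^m_{S^{-m}\theta})^{-1}(y)$, one rewrites
\begin{equation*}
G_{\theta,\theta',m+1}(y)-G_{\theta,\theta',m}(y)=f^m_{S^{-m}\theta'}\bigl(f_{S^{-(m+1)}\theta'}(w_{m+1})\bigr)-f^m_{S^{-m}\theta'}\bigl(f_{S^{-(m+1)}\theta}(w_{m+1})\bigr).
\end{equation*}
The mean value theorem, the uniform bound $(f^m_\eta)'\le\q^m$ for all $\eta\in\Theta$, Hypothesis~\ref{hypo:lip}, and~(\ref{eq:contr}) give
\begin{equation*}
|G_{\theta,\theta',m+1}(y)-G_{\theta,\theta',m}(y)|\le\q^m\cdot L\cdot C\alpha^{m+1}d(\theta,\theta')=LC\alpha\,(\alpha\q)^m d(\theta,\theta'),
\end{equation*}
which is summable since $\alpha\q<1$. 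Hence $(G_{\theta,\theta',n})$ is uniformly Cauchy on $\k(\theta)$ and the limit $G_{\theta,\theta'}$ is continuous and monotone increasing.

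For the image, each $G_{\theta,\theta',n}$ is a monotone bijection $\k_n(\theta)\to\k_n(\theta')$, and since $\k_n(\theta')\searrow\k(\theta')$ the limit satisfies $G_{\theta,\theta'}(\k(\theta))\subseteq\k(\theta')$. Repeating the construction with $\theta,\theta'$ swapped produces $G_{\theta',\theta}:\k(\theta')\to\k(\theta)$ with the same properties, and $G_{\theta',\theta,n}\circ G_{\theta,\theta',n}=\id$ on $\k_n(\theta)$ by construction; passing to the limit yields $G_{\theta',\theta}\circ G_{\theta,\theta'}=\id_{\k(\theta)}$, so $G_{\theta,\theta'}$ is in fact a homeomorphism of $\k(\theta)$ onto $\k(\theta')$.

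For the derivative, the chain rule gives
\begin{equation*}
\log G'_{\theta,\theta',n}(y)=\sum_{j=1}^n\Bigl[\log f'_{S^{-j}\theta'}(y_{n,j})-\log f'_{S^{-j}\theta}(b_j)\Bigr],
\end{equation*}
where $b_j:=(f^j_{S^{-j}\theta})^{-1}(y)$ does not depend on $n$, and $y_{n,j}:=(f^j_{S^{-j}\theta'})^{-1}(G_{\theta,\theta',n}(y))$. Applying Lemma~\ref{coro:id} with $\theta,\theta'$ swapped and indices $(n,n-j)$ gives $|y_{n,j}-b_j|\le C'\alpha^j d(\theta,\theta')$, hence by $\qq<\infty$ and the log-Lipschitz estimate $L'$ from Hypothesis~\ref{hypo:lip} each summand is bounded by $(\qq C'+L'C)\alpha^j d(\theta,\theta')$, uniformly in $n$ and $y$. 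The Weierstrass $M$-test, the already-established uniform convergence $G_{\theta,\theta',n}\to G_{\theta,\theta'}$, and Lipschitz continuity of $(f^j_{S^{-j}\theta'})^{-1}$ (a consequence of $\inf f'_\theta>0$) together yield uniform convergence of $\log G'_{\theta,\theta',n}$ on $\k(\theta)$ to a continuous function $\log H_{\theta,\theta'}$, where $H_{\theta,\theta'}$ is given by the same series with $y_{n,j}$ replaced by $a_j:=(f^j_{S^{-j}\theta'})^{-1}(G_{\theta,\theta'}(y))$. The classical theorem on uniform convergence of derivatives then gives $G'_{\theta,\theta'}=H_{\theta,\theta'}$ on $\interior{\k(\theta)}$, and continuity of $H_{\theta,\theta'}$ extends this identity to the one-sided derivatives at the endpoints. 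Since $H_{\theta,\theta'}>0$, whenever $\k(\theta)$ is non-degenerate (i.e.\ $\theta\notin P$) the map $G_{\theta,\theta'}$ is an orientation-preserving $C^{1}$ diffeomorphism onto $\k(\theta')$. The main technical point will be verifying uniform convergence in $y$ of the inner arguments $y_{n,j}\to a_j$ as $n\to\infty$; this combines the uniform convergence of the $G_n$ from step one with the uniform Lipschitz bound on the inverse branches $(f^j_{S^{-j}\theta'})^{-1}$.
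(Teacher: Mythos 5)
Your proposal is correct and reaches every assertion of the proposition by the same overall strategy as the paper — show that $(G_{\theta,\theta',n})_n$ and $(\log G'_{\theta,\theta',n})_n$ are uniformly Cauchy on $\k(\theta)$, invoke the classical theorem on uniform convergence of derivatives, and obtain $G_{\theta,\theta'}(\k(\theta))=\k(\theta')$ together with invertibility by swapping the roles of $\theta$ and $\theta'$ (legitimate, since condition (\ref{eq:contr}) is symmetric in the two points) — but both Cauchy estimates are organized differently. For the $C^0$ part the paper bounds $\dzz(G_{\theta,\theta',n}\circ G_{\theta,\theta',n+k}^{-1},\id)\le C'(\alpha\q)^nd(\theta,\theta')$ via the composition Lemmas \ref{lem:id} and \ref{coro:id}; you instead telescope consecutive differences, writing $G_{m+1}-G_m$ as $f^m_{S^{-m}\theta'}$ evaluated at two points that Hypothesis \ref{hypo:lip} and (\ref{eq:contr}) place within $LC\alpha^{m+1}d(\theta,\theta')$ of each other, and apply the mean value theorem — a more elementary route that bypasses Lemma \ref{lem:id} for this step (Lemma \ref{coro:id} is still needed for the derivatives). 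For the derivative part the paper compares $\log G'_n$ with $\log G'_m$ and splits the difference into blocks I and II, whereas you compare each $\log G'_n$ with a fixed limiting series indexed so that the $j$-th summand stabilizes; your identification of the inner arguments ($y_{n,j}=f^{n-j}_{S^{-n}\theta'}((f^n_{S^{-n}\theta})^{-1}(y))$, $b_j=f^{n-j}_{S^{-n}\theta}((f^n_{S^{-n}\theta})^{-1}(y))$) is exactly what makes Lemma \ref{coro:id} applicable and the domination by $(\qq C'+L'C)\alpha^jd(\theta,\theta')$ transparent. One caveat on the point you flag as remaining: the Lipschitz constant of $(f^j_{S^{-j}\theta'})^{-1}$ is of order $(\inf f')^{-j}$ and is \emph{not} uniform in $j$, so you cannot conclude $y_{n,j}\to a_j$ uniformly over all $j\le n$ at once; instead split the series at a large $J$, dispose of the tail $j>J$ with the summable bound $2\qq C'\alpha^jd(\theta,\theta')$ (valid uniformly in $n$ and $y$), and only then use the finitely many fixed, $j$-dependent Lipschitz constants together with $G_n\to G$ uniformly. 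With that bookkeeping made explicit the argument is complete, and your observation that $\log H_{\theta,\theta'}$ is a finite absolutely convergent sum, hence $H_{\theta,\theta'}>0$, cleanly delivers the diffeomorphism claim for $\theta\notin P$.
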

\begin{proof}
Recall that $\phi_{n}^{-}(\theta) \nearrow \phi^{-}(\theta)$ and $ \phi_{n}^{+}(\theta) \searrow \phi^{+}(\theta)$. Therefore, for $k\geq 1$,  $$\k_{n+k}(\theta) \subset \k_{n}(\theta).$$
Also for every $k\geq 1,$
$$\dzz(G_{\theta , \theta'  , n} , G_{\theta, \theta' , n+k}) = \dzz(G_{\theta , \theta', n} \circ (G_{\theta , \theta' ,  n+k})^{-1}, \id \mid_{\k_{n+k}(\theta')} ).$$
\\
For every $y \in \k_{n+k}(\theta')$,
\begin{equation*}
\begin{split}
&G_{\theta, \theta' , n} \circ G_{\theta, \theta' , n+k}^{-1}(y)\\&= (f^{n}_{S^{-n}\theta'}  \circ (f^{n}_{S^{-n}\theta})^{-1} \circ f^{n+k}_{S^{-(n+k)}\theta} \circ (f^{n+k}_{S^{-(n+k)}\theta'})^{-1}) (y)\\
&=(f^{n}_{S^{-n}\theta'} \circ ({f}^{n}_{S^{-n}\theta})^{-1} \circ f^{n}_{S^{-n}\theta} \circ f^{k}_{S^{-(n+k)}\theta} \circ (f^{k}_{S^{-(n+k)}\theta'})^{-1} \circ (f^{n}_{S^{-n}\theta'})^{-1})(y)\\
&=
(f^{n}_{S^{-n}\theta'} \circ  f^{k}_{S^{-(n+k)}\theta} \circ (f^{k}_{S^{-(n+k)}\theta'})^{-1} \circ (f^{n}_{S^{-n}\theta'})^{-1})(y).
\end{split}
\end{equation*}
By Lemma \ref{coro:id},
$$\dzz(f_{S^{-(n+k)}\theta}^{k} \circ (f_{S^{-(n+k)}\theta'}^{k})^{-1} , \id) \leq C'\alpha^{n}d(\theta,\theta').$$
In order to apply Lemma \ref{lem:id} and Hypothesis \ref{hypo:Q}, let\\
\begin{equation*}
\begin{split}
h&= f^{k}_{S^{-(n+k)}\theta } \circ (f^{k}_{S^{-(n+k)}\theta'})^{-1} ,
\\
 f&=g= f^{n}_{S^{-n}\theta'} ,\\
\epsilon'&= 0 ,\\
\epsilon&= C'\alpha^{n}d(\theta,\theta') .
\end{split}
\end{equation*}
Then
\begin{equation*}
\begin{split}
\dzz(G_{\theta, \theta' , n} \circ G_{\theta, \theta' , n+k}^{-1} , \id )
&= \dzz(f^{n}_{S^{-n}\theta'}  \circ f^{k}_{S^{-(n+k)}\theta } \circ (f^{k}_{S^{-(n+k)}\theta'})^{-1} \circ (f^{n}_{S^{-n}\theta'})^{-1}  , \id)
\\
&\leq
\epsilon' + \q^{n}\epsilon \\
&= C'(\alpha \q)^{n}d(\theta,\theta').
\end{split}
\end{equation*}
Thus $\dzz(G_{\theta, \theta' , n} \circ G_{\theta, \theta' , n+k}^{-1} , \id ) \leq C'\cdot (\alpha \q)^{n}\cdot d(\theta,\theta')$ uniformly for all $k\geq0$ where $C'$ is the constant $\frac{\alpha CL}{1-\alpha \q}$.
We proved the sequence $G_{\theta, \theta' , n} = f^{n}_{S^{-n}\theta'} \circ (f^{n}_{S^{-n}\theta})^{-1}$ is uniformly Cauchy and consequently uniformly convergent. \\
We turn to the uniform convergence of the derivatives. Observe first that
\begin{equation*}
\begin{aligned}
\log G'_{\theta,\theta',n}(y) &= \log \big((f^{n}_{S^{-n}\theta'})'((f^{n}_{S^{-n}\theta})^{-1}(y)) \cdot ((f^{n}_{S^{-n}\theta})^{-1})'(y) \big)\\
&= \sum _{k=0}^{n-1} \log f'_{S^{k-n}\theta'}
(f^{k}_{S^{-n}\theta'} \circ (f^{n}_{S^{-n}\theta})^{-1}(y)) - \log (f^{n}_{S^{-n}\theta})'((f^{n}_{S^{-n}\theta})^{-1}(y))\\
& = \sum_{k=0}^{n-1} \log \frac{f'_{S^{k-n}\theta'}(f^{k}_{S^{-n}\theta'}\circ (f^{n}_{S^{-n}\theta})^{-1}(y))}{f'_{S^{k-n}\theta}(f^{k}_{S^{-n}\theta}\circ(f^{n}_{S^{-n}\theta})^{-1}(y))}.
\end{aligned}
\end{equation*}
Now let $n>m$ . Then
\begin{equation*}
\begin{aligned}
&\log G'_{\theta,\theta',n}(y)- \log G'_{\theta,\theta',m}(y) \\
&= \sum _{j=0}^{n-1} \log \frac{f'_{S^{j-n}\theta'}(f^{j}_{S^{-n}\theta'}((f^{n}_{S^{-n}\theta})^{-1}(y)))}{f'_{S^{j-n}\theta}(f^{j}_{S^{-n}\theta}((f^{n}_{S^{-n}\theta})^{-1}(y)))} - \sum _{k=0}^{m-1} \log \frac{f'_{S^{k-m}\theta'}(f^{k}_{S^{-m}\theta'}((f^{m}_{S^{-m}\theta})^{-1}(y)))}{f'_{S^{k-m}\theta}(f^{k}_{S^{-m}\theta}((f^{m}_{S^{-m}\theta})^{-1}(y)))} \\
&= \sum _{j=0}^{n-m-1} \log \frac{f'_{S^{j-n}\theta'}(f^{j}_{S^{-n}\theta'}((f^{n}_{S^{-n}\theta})^{-1}(y)))}{f'_{S^{j-n}\theta}(f^{j}_{S^{-n}\theta}((f^{n}_{S^{-n}\theta})^{-1}(y)))}\\ 
&+ \sum _{j=n-m}^{n-1} \log \frac{f'_{S^{j-n}\theta'}(f^{j}_{S^{-n}\theta'}((f^{n}_{S^{-n}\theta})^{-1}(y)))}{f'_{S^{j-n}\theta}(f^{j}_{S^{-n}\theta}((f^{n}_{S^{-n}\theta})^{-1}(y)))} 
- \sum _{k=0}^{m-1} \log \frac{f'_{S^{k-m}\theta'}(f^{k}_{S^{-m}\theta'}((f^{m}_{S^{-m}\theta})^{-1}(y)))}{f'_{S^{k-m}\theta}(f^{k}_{S^{-m}\theta}((f^{m}_{S^{-m}\theta})^{-1}(y)))} \\
&= \text{I}+\text{II},
\end{aligned}
\end{equation*}
where
\begin{equation*}
\begin{aligned}
\text{I} &= \sum _{j=0}^{n-m-1} \log \frac{f'_{S^{j-n}\theta'}(f^{j}_{S^{-n}\theta'}((f^{n}_{S^{-n}\theta})^{-1}(y)))}{f'_{S^{j-n}\theta}(f^{j}_{S^{-n}\theta}
((f^{n}_{S^{-n}\theta})^{-1}(y)))} ,\\
\text{II} & = \sum _{j=n-m}^{n-1} \log \frac{f'_{S^{j-n}\theta'}(f^{j}_{S^{-n}\theta'}((f^{n}_{S^{-n}\theta})^{-1}(y)))}{f'_{S^{j-n}\theta}(f^{j}_{S^{-n}\theta}((f^{n}_{S^{-n}\theta})^{-1}(y)))} - \sum _{k=0}^{m-1} \log \frac{f'_{S^{k-m}\theta'}(f^{k}_{S^{-m}\theta'}((f^{m}_{S^{-m}\theta})^{-1}(y)))}{f'_{S^{k-m}\theta}(f^{k}_{S^{-m}\theta}((f^{m}_{S^{-m}\theta})^{-1}(y)))}.
\end{aligned}
\end{equation*}
Consider $z=(f^{n}_{S^{-n}\theta})^{-1}(y)$,
\begin{equation*}
\text{I} = \sum _{j=0}^{n-m-1} \log \frac{f'_{S^{j-n}\theta'}(f^{j}_{S^{-n}\theta}(z))}{f'_{S^{j-n}\theta}(f^{j}_{S^{-n}\theta}(z))} + \log \frac{f'_{S^{j-n}\theta'}(f^{j}_{S^{-n}\theta'}(z))}{f'_{S^{j-n}\theta'}(f^{j}_{S^{-n}\theta}(z))}.
\end{equation*}
By Hypothesis \ref{hypo:lip}, $\theta \mapsto \log f'_{\bullet}$ is Lipschitz. Hence
\begin{equation*}
\text{I} \leq \sum_{j=0}^{n-m-1} L'd(S^{j-n}\theta,S^{j-n}\theta')+\qq  \cdot |f^{j}_{S^{-n}\theta'}(z)-f^{j}_{S^{-n}\theta}(z)|.
\end{equation*}
By Lemma \ref{coro:id} and Hypothesis \ref{hypo:factor},
\begin{equation*}
\begin{aligned}
\text{I} &\leq \sum_{j=0}^{n-m-1} L'C \alpha ^{n-j}d(\theta,\theta') + \qq C'\alpha^{n-j}d(\theta,\theta')\\
&\leq \sum_{j=0}^{n-m-1} C'' \alpha^{n-j} d(\theta,\theta')  \quad \text{where} \quad C'' =L'C+\qq C'\\
&= \sum_{l=m+1}^{n} C''  \alpha^{l} d(\theta,\theta ')\\
&\leq \sum_{l=m}^{\infty} C''  \alpha^{l} d(\theta,\theta ')\\
&= C'' \frac{\alpha^{m}}{1-\alpha}d(\theta,\theta').
\end{aligned}
\end{equation*}
Now consider (II). If we change the index $j$ to $k+n-m$ then
\begin{equation*}
\begin{aligned}
\text{II} &=   \sum _{k=0}^{m-1} \log \frac{f'_{S^{k-m}\theta'}(f^{k+n-m}_{S^{-n}\theta'}
((f^{n}_{S^{-n}\theta})^{-1}(y)))}{f'_{S^{k-m}\theta}(f^{k+n-m}_{S^{-n}\theta}
((f^{n}_{S^{-n}\theta})^{-1}(y)))}- \sum _{k=0}^{m-1} \log \frac{f'_{S^{k-m}\theta'}(f^{k}_{S^{-m}\theta'}
((f^{m}_{S^{-m}\theta})^{-1}(y)))}{f'_{S^{k-m}\theta}(f^{k}_{S^{-m}\theta}((f^{m}_{S^{-m}\theta})^{-1}(y)))}\\
&=  \sum _{k=0}^{m-1} \log \frac{f'_{S^{k-m}\theta'}((f^{k}_{S^{-m}\theta'}\circ f^{n-m}_{S^{-n}\theta'})
((f^{n-m}_{S^{-n}\theta})^{-1} \circ (f^{m}_{S^{-m}\theta})^{-1}(y)))}{f'_{S^{k-m}\theta}((f^{m-k}_{S^{k-m}\theta})^{-1}(y))}  \\
&- \sum _{k=0}^{m-1} \log \frac{f'_{S^{k-m}\theta'}((f^{k}_{S^{-m}\theta'}(f^{m}_{S^{-m}\theta})^{-1}(y)))}{f'_{S^{k-m}\theta}((f^{m-k}_{S^{k-m}\theta})^{-1}(y))}\\
&= \sum _{k=0}^{m-1} \log \frac{f'_{S^{k-m}\theta'}(({f}^{k}_{S^{-m}\theta'}\circ {f}^{n-m}_{S^{-n}\theta'})((f^{n-m}_{S^{-n}\theta})^{-1} \circ (f^{m}_{S^{-m}\theta})^{-1}(y)))}{f'_{S^{k-m}\theta'}(f^{k}_{S^{-m}\theta'}((f^{m}_{S^{-m}\theta})^{-1}(y)))} .
\end{aligned}
\end{equation*}
Let $t=(f^{m}_{S^{-m}\theta})^{-1}(y) $ and $t'=(f^{n-m}_{S^{-n}\theta'})\circ (f^{n-m}_{S^{-n}\theta})^{-1}(t)$. According to Lemma \ref{coro:id},
$$ \mid t - t' \mid  \leq C'  \cdot \alpha^{m} \cdot d(\theta,\theta'),$$
and observing Hypothesis \ref{hypo:Q}, this implies
$$\mid f^{k}_{S^{-m}\theta'}(t) - f^{k}_{S^{-m}\theta'}(t') \mid \leq C' \cdot \alpha^{m} \cdot \q^{k} \cdot d(\theta , \theta').$$
So by the same hypothesis we have
\begin{equation*}
\begin{aligned}
\text{II} &= \sum _{k=0}^{m-1} \log \frac{f'_{S^{k-m}\theta'}(f^{k}_{S^{-m}\theta'}(t'))}{f'_{S^{k-m}\theta'}(f^{k}_{S^{-m}\theta'}(t))} \leq \sum_{k=0}^{m-1} \qq \cdot C' \cdot \alpha^{m} \cdot \q^{k}  \cdot d(\theta,\theta')\\
& \leq \frac{\qq C'}{\q-1}(\alpha \q)^{m}d(\theta,\theta').
\end{aligned}
\end{equation*}
Hence
\begin{equation*}
\begin{aligned}
\text{I}+\text{II}
\leq C''  \frac{\alpha^{m}}{1-\alpha}d(\theta,\theta ')+\frac{\qq C'}{\q-1}(\alpha \q)^{m} d(\theta,\theta'),
\end{aligned}
\end{equation*}
where $\alpha Q_f < 1$ by Hypothesis \ref{hypo:Q}. This shows that $\{\log G'_{\theta,\theta',n}\}$ is a uniform Cauchy sequence, so that
$H_{\theta,\theta'}:=\lim_{n\to\infty} G'_{\theta,\theta',n}$ exists uniformly on $\k(\theta)$ and
$G_{\theta,\theta',n}$ converges in $\de$-distance to $G_{\theta,\theta'}$. See e.g Theorem 7.17 of \cite{R}. 
As $f_\theta'>0$ for all $\theta$, we have $H_{\theta,\theta'}\geq0$, so that $G_{\theta,\theta'}$ is orientation preserving.

Finally, We prove that $G_{\theta,\theta'}(\k(\theta))=\k(\theta')$. Let $x\in\k(\theta)$.  According to \eqref{eq:G2}, $G_{\theta,\theta'}(x)=\lim_{n\to\infty}y_n$, where $y_n:=G_{\theta,\theta',n}(x)$. On the other hand since $\k_n(\theta) \searrow \k_{\theta}$ then,
$$y_n\in G_{\theta,\theta',n}(\k(\theta)) \subseteq G_{\theta,\theta',n}(\k_n(\theta))=\k_n(\theta').$$
Hence
$$G_{\theta,\theta'}(x)=\lim_{n\to\infty}y_n\in\k(\theta').$$
So $G_{\theta,\theta'}(\k(\theta))\subseteq\k(\theta')$. The inverse is obvious by interchanging the role of $\theta$ and $\theta'$ and observing that
$G_{\theta',\theta}=\lim_{n\to\infty}G_{\theta',\theta,n}=\lim_{n\to\infty}G_{\theta,\theta',n}^{-1}=G_{\theta,\theta'}^{-1}$ on $\k(\theta')$.
\end{proof}
\begin{definition}According to Hypothesis~\ref{hypo:factor},  $ \sigma\Pi\theta$ and $\theta$ are in the same local stable fibre. In order to simplify the notations define:
\begin{eqnarray}\label{eq:G-def}
G_{\theta,n} := G_{\theta,\sigma \Pi \theta,n}, \ \text{and} \ G_{\theta} := G_{\theta,\sigma \Pi \theta}.
\end{eqnarray}
%where $G_{\theta,n}:=f^{n}_{S^{-n}(\sigma\Pi\theta )} \circ (f^{n}_{S^{-n}\theta})^{-1}$.
\end{definition}
\begin{definition}
Suppose  $\phi$ is  an invariant graph.
Define :
$$
\hp : \Theta \to \I , \theta \mapsto G_{\theta} (\phi(\theta)).
$$
\end{definition}

\begin{lemma}\label{lem:hp}
$\hppn:\Theta \mapsto \I$ depend on $\theta$ only through $\Pi \theta$.
\end{lemma}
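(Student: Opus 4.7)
The plan is to prove the stronger and very clean identity
\[
\hp^{\pm}(\theta) \;=\; \phi^{\pm}(\sigma\Pi\theta),
\]
from which the lemma follows at once, since the right-hand side depends on $\theta$ only through $\Pi\theta$.

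Almost all of the work is already done in Proposition~\ref{theo:g}. I would use three of its conclusions simultaneously: the map $G_{\theta,\sigma\Pi\theta}$ is continuous and orientation preserving (its derivative $H_{\theta,\sigma\Pi\theta}$ is non-negative, so $G_{\theta,\sigma\Pi\theta}$ is non-decreasing); it is surjective in the sense that $G_{\theta,\sigma\Pi\theta}(\k(\theta)) = \k(\sigma\Pi\theta)$; and by construction $\k(\theta)=[\phi^{-}(\theta),\phi^{+}(\theta)]$ and $\k(\sigma\Pi\theta)=[\phi^{-}(\sigma\Pi\theta),\phi^{+}(\sigma\Pi\theta)]$.

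The only substantive step is then the elementary remark that any non-decreasing continuous surjection between two closed (possibly degenerate) intervals is forced to send the maximum endpoint to the maximum endpoint and the minimum endpoint to the minimum endpoint. Applying this to $G_{\theta,\sigma\Pi\theta}$ gives $G_{\theta,\sigma\Pi\theta}(\phi^{\pm}(\theta))=\phi^{\pm}(\sigma\Pi\theta)$, hence $\hp^{\pm}(\theta)=\phi^{\pm}(\sigma\Pi\theta)$ by the definition of $\hp$.

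The one spot deserving a pause is the pinch case $\theta\in P$, where $\k(\theta)$ is a singleton. Surjectivity of $G_{\theta,\sigma\Pi\theta}$ then forces $\k(\sigma\Pi\theta)$ to be a singleton too (a point cannot surject onto a non-degenerate interval), so $\sigma\Pi\theta\in P$ and the endpoint identification is automatic. I do not anticipate any real obstacle here; the lemma is essentially an extraction of the geometric information already encoded in Proposition~\ref{theo:g}.
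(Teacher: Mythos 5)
Your proof is correct and follows essentially the same route as the paper: both derive the identity $\hp^{\pm}(\theta)=\phi^{\pm}(\sigma\Pi\theta)$ from the fact that $G_{\theta}:\k(\theta)\to\k(\sigma\Pi\theta)$ is an orientation-preserving continuous surjection and must therefore match up the corresponding endpoints. Your explicit treatment of the degenerate case $\theta\in P$ is a small point of extra care that the paper's one-line proof glosses over, but it changes nothing of substance.
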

\begin{proof}
By Proposition \ref{theo:g}, $\g : \k(\theta)\to\k(\sigma\Pi\theta)$ is an orientation preserving homeomorphism for each $\theta \in \Theta$, so
\begin{equation}\label{eq:hatphi}
\hp^{\pm}(\theta) =\g (\phi^{\pm}(\theta))  =\phi^{\pm}(\sigma \Pi \theta).
\end{equation}
\end{proof}

For $\theta\in\Theta$ denote $\hk(\theta):=[\hpn(\theta),\hpp(\theta)]=\k(\sigma\Pi\theta)$ and let
\begin{equation}\label{eq:hf-def}
\hf_\theta:=f_{S^{-1}(\sigma\Pi S\theta)} \circ G_{\sigma\Pi \theta,S^{-1}\sigma\Pi S\theta}.
\end{equation}
This is well defined, because
 \begin{equation*}
 \begin{aligned}
\sigma\Pi(S^{-1}\sigma\Pi S\theta ) 
&= \sigma \hs (\Pi\sigma)\Pi S\theta  = \sigma \hs \Pi S\theta = \sigma \Pi S^{-1} S\theta = \sigma \Pi \theta
=\sigma(\Pi\sigma)\Pi\theta\\
&=\sigma\Pi(\sigma\Pi\theta),
\end{aligned}
\end{equation*}
so that,
by Hypothesis \ref{hypo:factor}, $\sigma\Pi(S^{-1}\sigma\Pi S\theta )$ and $\sigma\Pi\theta$ are in the same local stable fibre.

\begin{theorem}\label{Lem:f}
For each $\theta\in\Theta$,
\begin{equation}\label{eq:h-transport}
\hf_\theta(\hk(\theta))=\hk(S\theta),
\end{equation}
\begin{equation}\label{eq:conj}
f_{\theta}= G_{S\theta}^{-1} \circ \hf_{\theta} \circ G_{\theta},
\end{equation}
and the map $\hf_\theta$ 
depends on $\theta$ only through $\Pi(S\theta) \in \Theta$. (This is the point which represents the local stable fibre through $\theta$, see Hypothesis~\ref{hypo:factor}.) If $\theta\not\in P$, then
$\hf_\theta:\hk(\theta)\to\hk(S\theta)$ is an orientation preserving diffeomorphism.
\end{theorem}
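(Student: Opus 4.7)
The plan is to establish the three claims in turn, relying on the consequence $\sigma\Pi(S^{-1}\sigma\Pi S\theta)=\sigma\Pi\theta$ already computed just before the statement, which places $\theta$, $\sigma\Pi\theta$, $S^{-1}\sigma\Pi S\theta$ and $\sigma\Pi(S^{-1}\sigma\Pi S\theta)$ in a common local stable fibre and hence allows free use of Proposition \ref{theo:g}.

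\textbf{Step 1 (transport identity \eqref{eq:h-transport}).} I would track the images of the two factors in \eqref{eq:hf-def}. Since $\hk(\theta)=\k(\sigma\Pi\theta)$, Proposition \ref{theo:g} gives $G_{\sigma\Pi\theta,S^{-1}\sigma\Pi S\theta}(\hk(\theta))=\k(S^{-1}\sigma\Pi S\theta)$, and then the fibre map $f_{S^{-1}\sigma\Pi S\theta}$ sends $\k(S^{-1}\sigma\Pi S\theta)$ onto $\k(\sigma\Pi S\theta)=\hk(S\theta)$, because each $f_\theta$ maps $\k(\theta)$ onto $\k(S\theta)$ by $F$-invariance of $\pn,\pp$.

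\textbf{Step 2 (conjugacy \eqref{eq:conj}).} First I would prove a cocycle identity: directly from \eqref{eq:Gn-def} the telescoping
$$G_{\theta_{2},\theta_{3},n}\circ G_{\theta_{1},\theta_{2},n}=f^{n}_{S^{-n}\theta_{3}}\circ (f^{n}_{S^{-n}\theta_{2}})^{-1}\circ f^{n}_{S^{-n}\theta_{2}}\circ(f^{n}_{S^{-n}\theta_{1}})^{-1}=G_{\theta_{1},\theta_{3},n}$$
holds, and passing to the limit gives $G_{\theta_{2},\theta_{3}}\circ G_{\theta_{1},\theta_{2}}=G_{\theta_{1},\theta_{3}}$ whenever the three points share a local stable fibre. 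Applied to $(\theta,\sigma\Pi\theta,S^{-1}\sigma\Pi S\theta)$, this yields $G_{\sigma\Pi\theta,S^{-1}\sigma\Pi S\theta}\circ G_{\theta}=G_{\theta,S^{-1}\sigma\Pi S\theta}$, so
$$\hf_{\theta}\circ G_{\theta}=f_{S^{-1}\sigma\Pi S\theta}\circ G_{\theta,S^{-1}\sigma\Pi S\theta}.$$
To identify this with $G_{S\theta}\circ f_{\theta}$, I would compute at the finite level, using $(f^{n+1}_{S^{-n}\theta})^{-1}=(f^{n}_{S^{-n}\theta})^{-1}\circ f_{\theta}^{-1}$:
$$G_{S\theta,\sigma\Pi S\theta,n+1}\circ f_{\theta}=f^{n+1}_{S^{-n-1}\sigma\Pi S\theta}\circ(f^{n+1}_{S^{-n}\theta})^{-1}\circ f_{\theta}=f_{S^{-1}\sigma\Pi S\theta}\circ f^{n}_{S^{-n-1}\sigma\Pi S\theta}\circ(f^{n}_{S^{-n}\theta})^{-1},$$
and the right-hand side is $f_{S^{-1}\sigma\Pi S\theta}\circ G_{\theta,S^{-1}\sigma\Pi S\theta,n}$. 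Taking $n\to\infty$ (legal by Proposition \ref{theo:g}) gives $\hf_{\theta}\circ G_{\theta}=G_{S\theta}\circ f_{\theta}$, which rearranges to \eqref{eq:conj} since $G_{S\theta}$ is a bijection on the relevant intervals.

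\textbf{Step 3 ($\Pi(S\theta)$-dependence).} Using the identity $\sigma\Pi\theta=\sigma\Pi(S^{-1}\sigma\Pi S\theta)$ I would rewrite
$$\hf_{\theta}=f_{S^{-1}\sigma\Pi S\theta}\circ G_{\sigma\Pi(S^{-1}\sigma\Pi S\theta),\,S^{-1}\sigma\Pi S\theta}.$$
Both factors now depend on $\theta$ only through $S^{-1}\sigma\Pi S\theta$, and hence only through $\Pi(S\theta)$.

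\textbf{Step 4 (diffeomorphism property).} Since each $f_{\theta}$ is an increasing $C^{3}$ injection, $P$ is both forward and backward $S$-invariant. Also, from \eqref{eq:hatphi} one has $G_{\theta}(\phi^{\pm}(\theta))=\phi^{\pm}(\sigma\Pi\theta)$, and $G_{\theta}$ is a continuous strictly increasing bijection from $\k(\theta)$ onto $\k(\sigma\Pi\theta)$; hence $\theta\in P$ iff $\sigma\Pi\theta\in P$. If $\theta\notin P$, then $S\theta$, $\sigma\Pi\theta$, $\sigma\Pi S\theta$ and $S^{-1}\sigma\Pi S\theta$ all avoid $P$, so Proposition \ref{theo:g} makes $G_{\sigma\Pi\theta,S^{-1}\sigma\Pi S\theta}$ an orientation-preserving diffeomorphism, and composition with the $C^{3}$ diffeomorphism $f_{S^{-1}\sigma\Pi S\theta}$ finishes the argument. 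The main obstacle I expect is the index bookkeeping in Step 2 — several different base points $S^{-n}\theta$, $S^{-n}\sigma\Pi\theta$, $S^{-n-1}\sigma\Pi S\theta$ appear and the telescoping has to be checked carefully before taking limits.
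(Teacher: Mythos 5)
Your proposal is correct and follows essentially the same route as the paper: the same finite-level telescoping identity $\hf_{\theta,n}\circ G_{\theta,n}=G_{S\theta,n+1}\circ f_{\theta}$ (you merely factor it through the cocycle relation for the $G_{\cdot,\cdot,n}$ before passing to the limit, whereas the paper controls the limit with a $\dz$-triangle inequality), together with the same direct applications of Proposition \ref{theo:g} for the transport identity, the $\Pi(S\theta)$-dependence, and the diffeomorphism property. Your Step 4 is in fact slightly more careful than the paper's one-line appeal to Proposition \ref{theo:g}, since you verify that all the relevant base points avoid $P$.
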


\begin{proof}
In view of \eqref{eq:hf-def}
\begin{equation*}
\begin{aligned}
\hf_\theta(\hk(\theta))
&=
f_{S^{-1}(\sigma\Pi S\theta)} \circ G_{\sigma\Pi \theta,S^{-1}\sigma\Pi S\theta}(\k(\sigma\Pi\theta))
=
f_{S^{-1}(\sigma\Pi S\theta)}(\k(S^{-1}\sigma\Pi S\theta))\\
&=
K(\sigma\Pi S\theta)
=
\hk(S\theta),
\end{aligned}
\end{equation*}
which is \eqref{eq:h-transport}.
If $\theta\not\in P$, then
$\hf_\theta:\k(\sigma\Pi\theta)\to\k(\sigma\Pi S\theta)$ is an orientation preserving diffeomorphism by Proposition~\ref{theo:g}.

As $\sigma\Pi\theta=\sigma\Pi S^{-1}(S\theta)=\sigma\hs(\Pi S\theta)$,
a look at \eqref{eq:hf-def} reveals that
the map $\hf_\theta$ 
depends on $\theta$ only through $\Pi(S\theta) \in \Theta$.

Let $\hf_{\theta,n}:=f_{S^{-1}(\sigma\Pi S\theta)} \circ G_{\sigma\Pi \theta,S^{-1}\sigma\Pi S\theta,n}$, and observe that $\lim_{n\to\infty}\hf_{\theta,n}=\hf_\theta$ uniformly on $\hk(\theta)$.
 By \eqref{eq:G-def} and
 (\ref{eq:Gn-def}) we have
\begin{equation*}
\begin{aligned}
\hf_{\theta,n} \circ G_{\theta,n}
& = 
f_{S^{-1}(\sigma\Pi S\theta)} \circ G_{\sigma\Pi \theta,S^{-1}\sigma\Pi S\theta,n}
\circ G_{\theta,\sigma\Pi\theta,n}\\
&=
f_{S^{-1}(\sigma\Pi S\theta)} \circ G_{\theta,S^{-1}\sigma\Pi S\theta,n}
\\
&=
f^{n+1}_{S^{-n}(S^{-1}\sigma\Pi S \theta)} \circ (f^{n}_{S^{-n}\theta})^{-1}\\
& = G_{S\theta,n+1} \circ f_{\theta}.
\end{aligned}
\end{equation*}
Hence
\begin{equation}\label{eq:fog}
\begin{aligned}
\dz(\hf_{\theta} \circ \g , G_{S\theta} \circ f_{\theta})
&\leq
 \dz (\hf_{\theta} \circ \g , \hf_{\theta}\circ G_{\theta ,n})+
  \dz (\hf_{\theta} \circ G_{\theta,n} , \hf_{\theta,n}\circ G_{\theta ,n})\\
&\hspace*{2cm}  +
\dz(G_{S\theta ,n+1}
\circ f_{\theta}, G_{S\theta}\circ f_{\theta}).
\end{aligned}
\end{equation}
According to Definition \ref{def:meter}, if $f,g,h \in\mathcal{D}(\I) , I_{g} \subseteq I_{f} , J_{h}\subseteq I_{g} , J_{h}\subseteq I_{f}$, one can see easily
\begin{eqnarray*}\label{eq:met}
\dz(f\circ h,g\circ h) \leq \dz(f,g).
\end{eqnarray*}
So the right hand side of (\ref{eq:fog}) is less than
\begin{equation*}
\dz(\hf_{\theta} \circ \g , \hf_{\theta} \circ G_{\theta,n}) + \dz(\hf_{\theta} , \hf_{\theta,n} ) + \dz(G_{S\theta ,n+1}
, G_{S\theta}).
\end{equation*}
In view of Proposition \ref{theo:g} and the continuity of $\hf_{\theta}$, the last sum tends to zero as $n\to \infty$. This means
$$\dz(\hf_{\theta} \circ \g , G_{S\theta} \circ f_{\theta}) = 0, $$
and consequently
$$f_{\theta}= G_{S\theta}^{-1} \circ \hf_{\theta} \circ G_{\theta}.$$
\end{proof}

\begin{lemma}
$\hp^{\pm}:\Theta \mapsto \I$ are invariant graphs for the family $(\hf_\theta)_{\theta\in\Theta}$ in the sense that
$$\hf_{\theta}(\hp^{\pm}(\theta)) = \hp^{\pm}(S (\theta)).$$
\end{lemma}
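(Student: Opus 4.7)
The plan is a direct chase through the conjugation identity established in Theorem~\ref{Lem:f}. By definition, $\hpp(\theta) = G_\theta(\pp(\theta))$ and $\hpn(\theta) = G_\theta(\pn(\theta))$, so everything reduces to showing that the two maps $\hf_\theta \circ G_\theta$ and $G_{S\theta} \circ f_\theta$ agree on the relevant values of $\pp(\theta)$ and $\pn(\theta)$.

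First, I would rewrite the conjugation identity \eqref{eq:conj} from Theorem~\ref{Lem:f}, namely
\begin{equation*}
f_\theta = G_{S\theta}^{-1} \circ \hf_\theta \circ G_\theta ,
\end{equation*}
in its equivalent form $\hf_\theta \circ G_\theta = G_{S\theta} \circ f_\theta$, which is valid on $\k(\theta)$ by Proposition~\ref{theo:g} and \eqref{eq:h-transport}. Evaluating both sides at $\ppn(\theta)\in \k(\theta)$ yields
\begin{equation*}
\hf_\theta\bigl(G_\theta(\ppn(\theta))\bigr) = G_{S\theta}\bigl(f_\theta(\ppn(\theta))\bigr).
\end{equation*}
The left-hand side is $\hf_\theta(\hppn(\theta))$ by the definition of $\hp^{\pm}$, and the right-hand side equals $G_{S\theta}(\ppn(S\theta)) = \hppn(S\theta)$ by the invariance of $\ppn$ under $F$ and the definition of $\hp^{\pm}$ applied at $S\theta$. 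Combining these two identifications gives exactly $\hf_\theta(\hppn(\theta)) = \hppn(S\theta)$, as claimed.

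The only thing that needs care is verifying that $\ppn(\theta)$ really lies in the domain $\k(\theta)$ on which the conjugation \eqref{eq:conj} holds and that $f_\theta(\ppn(\theta)) = \ppn(S\theta)$ lies in $\k(S\theta)$, so that applying $G_{S\theta}$ makes sense; both are immediate from $\k(\theta)=[\pn(\theta),\pp(\theta)]$ and the $F$-invariance of $\pp,\pn$. There is no real obstacle here: the lemma is essentially a cosmetic consequence of Theorem~\ref{Lem:f}, recording that the conjugation $G_\bullet$ transports the pullback graphs $\ppn$ of the original system to invariant graphs $\hppn$ of the new fibre family $(\hf_\theta)_{\theta\in\Theta}$.
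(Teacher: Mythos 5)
Your proof is correct and follows exactly the paper's argument: rewrite the conjugation identity \eqref{eq:conj} as $\hf_\theta\circ G_\theta = G_{S\theta}\circ f_\theta$, evaluate at $\ppn(\theta)$, and use the $F$-invariance of $\ppn$ together with the definition of $\hppn$. The extra domain check you include is a harmless (and welcome) addition to what the paper states.
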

\begin{proof}
By (\ref{eq:conj}), 
$$\hf_{\theta}(\hp^{\pm}(\theta)) = \hf_{\theta}(\g(\phi^{\pm}(\theta))) =  G_{S\theta} (f_{\theta}(\phi^{\pm}(\theta)) = G_{S\theta} (\phi^{\pm}(S\theta)) = \hp^{\pm}(S\theta).$$
\end{proof}

\begin{remark}
Lyapunov exponents for the family $(\hf_\theta)_{\theta\in\Theta}$ are defined as those for the family $(f_\theta)_{\theta\in\Theta}$ in Definition~\ref{def:lypunov}:
$$\hat\lambda (\theta,y) := \lim_{n\to \infty} \frac{1}{n} \log (\hf_{\theta}^{n})'(y) $$
whenever this limit exists, and
$$\hat\lambda_{\mu}(\hat\phi) := \int_{\Theta} \log \hf'_{\theta}(\hat\phi(\theta)) d\mu(\theta)$$
when $\mu \in \mathcal{E}_S(\Theta)$ with $\mu(P)=0$ and $\hat\phi$ is a graph invariant $\mu$-a.e.~for the family $(\hf_\theta)_{\theta\in\Theta}$ with $\log \hf'_{\theta}(\hat\phi(\theta)) \in \mathcal{L}^{1}_{\mu}$. The condition $\mu(P)=0$ guarantees that $\hf'_{\theta}(\hat\phi(\theta))$ is well defined for $\mu$-a.a~$\theta$.
\end{remark}

\begin{lemma}
The Lyapunov exponents of $\hppn$ coincide with those of $\ppn$, namely
$$\lambda_{\mu}(\hppn)  = \lambda_{\mu}(\ppn) = \int \log \f'(\ppn(\theta)) d\mu(\theta).$$
\end{lemma}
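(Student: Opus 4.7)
The plan is to differentiate the conjugacy from Theorem~\ref{Lem:f}, observe that the discrepancy between $\log \hat f'_\theta(\hat\phi^\pm(\theta))$ and $\log f'_\theta(\phi^\pm(\theta))$ is a coboundary, and conclude by Birkhoff's ergodic theorem. Throughout I assume the implicit standing condition $\mu(P)=0$ noted in the preceding remark, so that $\hat\lambda_\mu(\hat\phi^\pm)$ is well defined and the maps $G_\theta,G_{S\theta}$ are orientation-preserving diffeomorphisms for $\mu$-a.e.~$\theta$ (Proposition~\ref{theo:g}).

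Concretely, I would first iterate the conjugacy $\hat f_\theta \circ G_\theta = G_{S\theta}\circ f_\theta$ (equivalent to~\eqref{eq:conj}) to obtain $\hat f_\theta^n \circ G_\theta = G_{S^n\theta}\circ f_\theta^n$ for every $n\geq 1$. Differentiating at the endpoint $y=\phi^\pm(\theta)$ of $K(\theta)$ — using the one-sided derivatives guaranteed by Proposition~\ref{theo:g} — and recalling that $\hat\phi^\pm(\theta)=G_\theta(\phi^\pm(\theta))$ and $\phi^\pm(S^n\theta)=f_\theta^n(\phi^\pm(\theta))$, I arrive at
$$
(\hat f_\theta^n)'(\hat\phi^\pm(\theta))\cdot G'_\theta(\phi^\pm(\theta)) = G'_{S^n\theta}(\phi^\pm(S^n\theta))\cdot (f_\theta^n)'(\phi^\pm(\theta)).
$$
Taking logarithms (all factors are strictly positive off $P$) and writing $\psi(\theta):=\log G'_\theta(\phi^\pm(\theta))$, this reads
$$
\log(\hat f_\theta^n)'(\hat\phi^\pm(\theta)) - \log(f_\theta^n)'(\phi^\pm(\theta)) = \psi(S^n\theta) - \psi(\theta),
$$
a coboundary in $\psi$.

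The analytic crux is that $\psi$ is bounded, hence in $\mathcal{L}^1_\mu$. This is packaged inside Proposition~\ref{theo:g}: the explicit form of $\log G'_{\theta,1}(y)$ is bounded by a constant depending only on the Lipschitz constant $L$ and the uniform bounds on $f'_\theta$ from Hypotheses~\ref{hypo:lip} and~\ref{hypo:Q}, while the uniform Cauchy estimate derived there gives $|\log G'_{\theta,n}(y)-\log G'_{\theta,1}(y)|\leq \mathrm{const}\cdot d(\theta,\sigma\Pi\theta)$, uniformly in $n$ and $y\in K(\theta)$. Compactness of $\Theta$ then bounds $\psi$ uniformly in $\theta$, including at the endpoints where the one-sided derivative is used.

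Dividing the coboundary identity by $n$ and applying Birkhoff's theorem to the bounded function $\psi$ yields $\frac{1}{n}\psi(S^n\theta)\to 0$ for $\mu$-a.e.~$\theta$. Therefore
$$
\hat\lambda(\theta,\hat\phi^\pm(\theta)) = \lim_{n\to\infty}\tfrac{1}{n}\log(\hat f_\theta^n)'(\hat\phi^\pm(\theta)) = \lim_{n\to\infty}\tfrac{1}{n}\log(f_\theta^n)'(\phi^\pm(\theta)) = \lambda(\theta,\phi^\pm(\theta))
$$
for $\mu$-a.e.~$\theta$. Integrating against $\mu$ and invoking the pointwise identity $\lambda(\theta,\phi^\pm(\theta))=\lambda_\mu(\phi^\pm)$ already displayed after Definition~\ref{def:lypunov} (together with its analogue for the family $(\hat f_\theta)$) gives $\hat\lambda_\mu(\hat\phi^\pm)=\lambda_\mu(\phi^\pm)$, which combined with Definition~\ref{def:lypunov} applied to $f_\theta$ yields the displayed formula. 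The main delicate point is the uniform boundedness of $\psi$ at the endpoints of $K(\theta)$, but this is already built into the Cauchy estimates in the proof of Proposition~\ref{theo:g}.
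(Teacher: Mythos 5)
Your proof is correct, and it is built on the same key identity as the paper's: the chain rule applied to the conjugacy $f_\theta = G_{S\theta}^{-1}\circ\hat f_\theta\circ G_\theta$ of Theorem~\ref{Lem:f}, which exhibits $\log \hat f'_\theta(\hat\phi^{\pm}(\theta)) - \log f'_\theta(\phi^{\pm}(\theta))$ as the coboundary $\psi(S\theta)-\psi(\theta)$ with $\psi(\theta)=\log G'_\theta(\phi^{\pm}(\theta))$. The execution differs: the paper stops at $n=1$, integrates this identity against $\mu$, and cancels the two $G'$-terms by $S$-invariance of $\mu$, whereas you telescope the coboundary over $n$ iterates, divide by $n$, and pass to the pointwise exponents before integrating. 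The two routes are equivalent, but yours makes explicit a point the paper leaves tacit: the cancellation $\int\psi\circ S\,d\mu=\int\psi\,d\mu$ (respectively, the convergence $\frac{1}{n}\psi(S^n\theta)\to0$) requires $\psi\in\mathcal{L}^1_\mu$ (respectively, boundedness of $\psi$), and your observation that the Cauchy estimates in the proof of Proposition~\ref{theo:g}, applied with $m=0$ (where $G_{\theta,\theta',0}=\id$), bound $|\log G'_{\theta,n}|$ by a constant times $d(\theta,\sigma\Pi\theta)$ uniformly in $n$ and in $y$ supplies exactly this. Two cosmetic remarks: for bounded $\psi$ the convergence $\frac{1}{n}\psi(S^n\theta)\to0$ holds everywhere and needs no appeal to Birkhoff; and the standing hypothesis $\mu(P)=0$, which you correctly invoke, together with the $S$-invariance of $P$ from Proposition~\ref{prop:pset}, is what makes $\psi$ and the one-sided derivatives at the endpoints of $\k(\theta)$ well defined along $\mu$-a.e.\ orbit.
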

\begin{proof} 
The Lyapunov exponents of $\hppn$ are equal to
\begin{equation*}
\begin{aligned}
\lambda_{\mu}(\hppn)
&= \int\log |\hf_{\theta}'(\hppn(\theta))|d\mu(\theta)\\
&= \int \big(\log |G'_{S\theta}(f_{\theta}\circ G_{\theta}^{-1}(\hppn(\theta)))| + \log| f_{\theta}'(G_{\theta}^{-1}(\hppn(\theta)))|\\
&\hspace*{5.5cm} - \log| G'_{\theta}(G_{\theta}^{-1}(\hppn(\theta)))|\big)d\mu(\theta)
\\
&= \int (\log |G'_{S\theta}(\ppn (S\theta))| + \log| f_{\theta}'(\ppn(\theta))|- \log| (G'_{\theta}(\ppn(\theta))|)d\mu(\theta)
\\&=\lambda_{\mu}(\ppn).
\end{aligned}
\end{equation*}
\end{proof}

\begin{definition}
Denote by  $\hclpn$ the closure of the graph of $ \hp^{\pm}$ in $\Theta\times\I$ and by $\hfilpn$ its filled-in closure in $\Theta\times\I$.
Let $\hat{P} := \{ \theta \in \Theta : \hp^{+}(\theta) = \hp^{-}(\theta) \}$ be the set of pinch points of the new system, and denote by $\hat{C}^{\pm} \subseteq \Theta$ the set of continuity points of $\hp^{\pm}$.
\end{definition}

\begin{proposition} \label{prop:pset}
1) $P=\hat{P}$.\\
2)  $P$ is forward and backward $S$-invariant.\\
3) $\Pi P$ is forward and backward $\hs$-invariant.
\end{proposition}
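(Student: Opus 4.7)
My plan is to prove (1) first, since it implies the key equivalence $\theta\in P \iff \sigma\Pi\theta\in P$, from which it follows that membership in $P$ depends on $\theta$ only through $\Pi\theta$, i.e., $P=\Pi^{-1}(\Pi P)$. This identity will then be the bridge between (2) and (3). To establish (1), I would use equation~\eqref{eq:hatphi}, which gives $\hppn(\theta)=\phi^{\pm}(\sigma\Pi\theta)$, so that $\theta\in\hat P$ iff $\sigma\Pi\theta\in P$. It therefore suffices to show $\theta\in P \iff \sigma\Pi\theta\in P$, and this follows directly from Proposition~\ref{theo:g}: the map $G_\theta:\k(\theta)\to\k(\sigma\Pi\theta)$ satisfies $G_\theta(\k(\theta))=\k(\sigma\Pi\theta)$, is orientation preserving, and (either as a diffeomorphism when $\theta\notin P$, or trivially when $\k(\theta)$ is already a single point) is a bijection onto its image. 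Hence $\k(\theta)$ is a singleton iff $\k(\sigma\Pi\theta)$ is.

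For (2), the argument is short and purely algebraic. The invariance $f_\theta(\ppn(\theta))=\ppn(S\theta)$ and the injectivity of $f_\theta$ (strictly increasing by Hypothesis~\ref{hypo:lip}) yield both inclusions: if $\theta\in P$, applying $f_\theta$ to the common value $\pp(\theta)=\pn(\theta)$ gives $\pp(S\theta)=\pn(S\theta)$, so $S\theta\in P$. Conversely, if $S\theta\in P$ then $f_\theta(\pp(\theta))=f_\theta(\pn(\theta))$ and injectivity forces $\theta\in P$.

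For (3), I would combine (2) with $P=\Pi^{-1}(\Pi P)$ and the factor relation $\hs\circ\Pi=\Pi\circ S^{-1}$. Forward invariance: $\hs(\Pi P)=\Pi(S^{-1}P)\subseteq\Pi P$ by backward $S$-invariance of $P$. Backward invariance: if $y\in\hs^{-1}(\Pi P)$, write $y=\Pi\sigma y$, so that $\hs y=\Pi(S^{-1}\sigma y)\in\Pi P$, giving $S^{-1}\sigma y\in\Pi^{-1}(\Pi P)=P$; forward $S$-invariance then yields $\sigma y\in P$, hence $y=\Pi\sigma y\in\Pi P$.

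The only step that requires any care is part (1), specifically checking that $G_\theta$ is a bijection even for pinch points. This is handled transparently by the statement $G_\theta(\k(\theta))=\k(\sigma\Pi\theta)$ from Proposition~\ref{theo:g}: if $\k(\theta)$ degenerates to a point, its image (which is all of $\k(\sigma\Pi\theta)$) is also a point, and conversely. Parts (2) and (3) are then essentially formal consequences of (1) plus the invariance and factor relations.
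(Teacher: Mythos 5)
Your proposal is correct and follows essentially the same route as the paper: part (1) via the bijectivity of $G_\theta:\k(\theta)\to\k(\sigma\Pi\theta)$ from Proposition~\ref{theo:g} together with Lemma~\ref{lem:hp}, part (2) via injectivity of the fibre maps, and part (3) via the factor relation combined with the equivalence $\theta\in P\iff\sigma\Pi\theta\in P$ (which you package as $P=\Pi^{-1}(\Pi P)$). Your explicit treatment of the degenerate case $\theta\in P$ in part (1) is in fact slightly more careful than the paper's bare assertion that $G_\theta$ is a homeomorphism.
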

 \begin{proof}
1) According to Proposition \ref{theo:g},  $\g:\k(\theta)\to\k(\sigma\Pi\theta)$ is a homeomorphism. So, the end points of $\k(\theta)$ are equal if and only if their images are equal, hence by Lemma \ref{lem:hp}, and definition of $P$ and $\hat{P}$ it is clear that $P=\hat{P}$.\\
2) $f_{\theta}$ is invertible so
$$\theta \in P \Leftrightarrow \f(\phi^{-}(\theta)) = \f(\phi^{+}(\theta))  \Leftrightarrow \phi^{-}(S\theta)=\phi^{+}(S\theta)  \Leftrightarrow  S\theta \in P .$$\\
3) Let $u \in \hs^{-1}(\Pi P)$, then there exist $\theta \in P$ and $\theta' \in \Theta$ such that $\hs u = \Pi \theta , u = \Pi \theta'$. So
$$\sigma \Pi \theta = \sigma \hs u = \sigma \hs \Pi \theta' = \sigma \Pi S^{-1} \theta'.$$
On the other hand,  $\theta \in P$ if and only if $\sigma \Pi \theta \in P.$ Indeed
$$\theta \in P \Leftrightarrow \theta \in \hat{P} \Leftrightarrow \hp^{-}(\theta)=\hp^{+}  (\theta) \Leftrightarrow \phi^{-}(\sigma \Pi \theta)=\phi^{+}(\sigma \Pi \theta)\Leftrightarrow \sigma \Pi \theta \in P.$$
So according to part 2,
$$\theta \in P  \Leftrightarrow \sigma \Pi \theta \in P  \Leftrightarrow \sigma \Pi (S^{-1}\theta') \in P  \Leftrightarrow S^{-1}\theta' \in P  \Leftrightarrow \theta' \in P.$$
This shows that $u \in \Pi P $ and so $\hs^{-1}\Pi P \subseteq \Pi P$. Conversely, assume $u \in \Pi P$. Then there
exists $\theta' \in P$ such that $u=\Pi\theta'$. So
\begin{eqnarray}
 \hs u = \Pi(S^{-1}\theta') \in \Pi(S^{-1}P) = \Pi P.
\end{eqnarray}
Hence $\hs(\Pi P) \subseteq \Pi P$.
\end{proof}
We see in the next theorem how this idea can be applied when the base map is an Anosov surface diffeomorphism.
\begin{theorem}\label{theo:anosov}
Let $\Theta = \Tt$ and let $S:\Tt\to\Tt$ be a $C^2$ Anosov diffeomorphism. Then  the set of pinch points consists of complete global stable fibre.
\end{theorem}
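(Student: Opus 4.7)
The plan is to combine the conjugation of Theorem~\ref{Lem:f} with Proposition~\ref{prop:pset}(1) to reduce the question to a statement about the fibres of $\Pi$, and then to identify these fibres with complete global stable fibres of $S$ via the Markov partition structure of Example~\ref{exp:anosov}.

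First, I would show that $P$ is a union of complete $\Pi$-fibres. By Lemma~\ref{lem:hp}, $\hppn(\theta)=\ppn(\sigma\Pi\theta)$ depends on $\theta$ only through $\Pi\theta$, so the defining condition $\hpp(\theta)=\hpn(\theta)$ of $\hat{P}$ is $\Pi$-measurable; equivalently, $\hat{P}=\Pi^{-1}(\Pi\hat{P})$. Proposition~\ref{prop:pset}(1) then yields $P=\hat{P}=\Pi^{-1}(\Pi P)$, which is exactly the desired $\Pi$-saturation. Up to this point the argument is essentially bookkeeping and uses none of the Anosov structure; it applies to any base $S$ satisfying Hypothesis~\ref{hypo:factor}.

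Next I would identify each fibre of $\Pi$ with a complete global stable fibre of $S$ in the Anosov setting. For the forward direction, suppose $\Pi\theta_1=\Pi\theta_2=u$; then $\sigma\Pi\theta_1=\sigma\Pi\theta_2=\sigma(u)$, and applying (\ref{eq:contr}) to both $\theta_i$ together with the triangle inequality gives
\[
d(S^{-n}\theta_1,S^{-n}\theta_2)\leq 2C\alpha^n\,\max\{d(\theta_1,\sigma(u)),d(\theta_2,\sigma(u))\}\to 0,
\]
so $\theta_1$ and $\theta_2$ lie on a common global stable fibre of $S$. For the reverse inclusion one uses the Markov partition construction indicated in Example~\ref{exp:anosov} and developed in \cite{Po,K1}: the value $\Pi\theta$ encodes the symbolic $S^{-1}$-itinerary of $\theta$, and this itinerary is constant along each global stable fibre, so every point of the global stable fibre through $\sigma(u)$ is mapped to $u$ by $\Pi$.

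The main technical obstacle is precisely this reverse inclusion: one must verify that the $\Pi$-coding is \emph{injective modulo stable fibres}, i.e., that two points on different global stable fibres have different $\Pi$-values. This is a standard consequence of the rectangle structure of the Markov partition and the expansivity of the factor map $\hs$, but it is where the hyperbolicity of the base enters in an essential way. Combined with the $\Pi$-saturation of $P$, this establishes that $P$ is a union of complete global stable fibres of $S$, as claimed.
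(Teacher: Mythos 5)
Your first step---$P=\hat P=\Pi^{-1}(\Pi P)$ via Lemma~\ref{lem:hp} and Proposition~\ref{prop:pset}(1)---is correct and is also the paper's starting point, and your ``forward direction'' correctly shows that each $\Pi$-fibre is contained in a single global stable fibre. But the step you yourself flag as the main obstacle, the claimed reverse inclusion, is not merely unproven: it is false. For an Anosov diffeomorphism the fibre $\Pi^{-1}(u)$ is only a \emph{local} stable segment, namely a connected component of the intersection of a global stable leaf with one Markov rectangle. The global stable leaf (w.r.t.\ $S^{-1}$, in the sense of \eqref{eq:contr}) through a point is a densely immersed curve meeting every rectangle infinitely often, and two of its points lying in different rectangles have different symbolic $S^{-1}$-itineraries (they already disagree at time $0$), hence different $\Pi$-values. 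So the itinerary is constant only along local stable fibres, your identification of $\Pi$-fibres with complete global stable leaves cannot hold, and what your argument actually establishes is only that $P$ is a union of local stable fibres.

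The missing ingredient is Proposition~\ref{prop:pset}(2): $P$ is forward and backward $S$-invariant. This is exactly how the paper passes from ``union of local stable fibres'' to ``union of complete global stable fibres'': writing the global leaf through $\theta$ as $\bigcup_{n\geq 0}S^{n}\bigl(W_{\mathrm{loc}}(S^{-n}\theta)\bigr)$, where $W_{\mathrm{loc}}$ denotes the local stable fibre, one has $S^{-n}\theta\in P$ for all $n$ by invariance, hence $W_{\mathrm{loc}}(S^{-n}\theta)\subseteq P$ by the $\Pi$-saturation you established, hence $S^{n}\bigl(W_{\mathrm{loc}}(S^{-n}\theta)\bigr)\subseteq S^{n}(P)=P$. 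Inserting this step repairs your proof and brings it in line with the paper's (very short) argument.
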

\begin{proof}
By Proposition \ref{prop:pset}\,(1), for the arbitrary Markov partition  $\{R_1,\cdots,R_n\}$, the set $P$ is a union of local stable fibres (i.e.~of connected components of intersections of global stable fibers with Markov rectangles). Hence, by part (2) of the same proposition,  the set $P$ consists of complete global stable fibre.
\end{proof}
\section{Negative Schwarzian branches driven by a Baker map}
In order to deal with the set of discontinuity points at the base, and keep the technicalities at minimum, we consider Baker transformations. These maps are bijective  with discontinuity at $a \in (0,1)$, i.e.
\[
S(\xi,x) = \begin{cases}
(\tau(\xi) , ax) & \text{if } \xi \in [0,a]\\
(\tau(\xi) , a+(1-a)x) &\text{if } \xi \in [a,1]
\end{cases},
\]
with
\[
\tau(\xi) =
\begin{cases}
a^{-1} \xi & \text{if } \xi \in [0,a)\\
(1-a)^{-1}(\xi-a) & \text{if } \xi \in [a,1)
\end{cases}.
\]

\begin{example}
Let $\tau(x) = 2x \mod 1, \f(y) = \arctan(ry) + \epsilon \cos(2\pi (x+\xi))$ , where
$r = 1.1$ and $\epsilon \in [0, 0.1]$. Then, choosing $M=0.86$ and $\I=[-M,M]$, one checks that $f_\theta(\I)\subseteq[-0.858, 0.858] \subset \interior{\I}$ for all $\theta\in\Theta=\Tt$. Figure 1 shows that in this case  we loose the continuity of upper  and lower bounding graphs.
\end{example}

\begin{figure}
\centering
\includegraphics[scale=0.4]{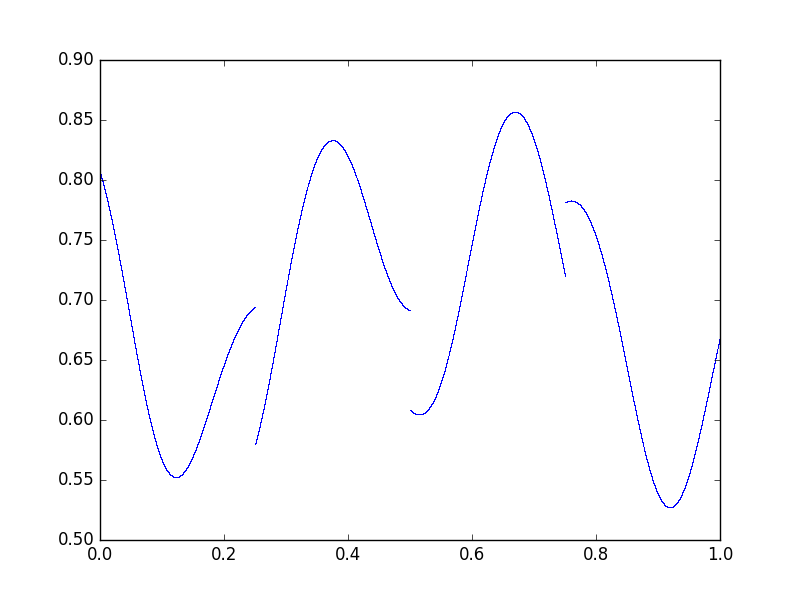}
\caption{discontinuity of $\phi_2^+$, where the parameters are $\epsilon =0.1 ,r = 1.1 $}
\end{figure}
\begin{remark}\label{baker0}
If $S$ is Baker transformation with discontinuity at $a$,
then  the sets $\clpn$ and $\filpn$ are nearly forward $F$-invariant from the measure-theoretic point of view in the sense that 
$F(\clpn) \setminus \clpn \subseteq \{ (0, x , y ) \in \Theta \times \I : 0 \leq x < 1, y \in \I \}$.
\end{remark}
We often will identify points $x \in \T = \R / \Z$ with points $x \in \a := [0, 1)$. In this case we also
identify $\2 \in \Tt$ with $\2 \in \aa.$
\begin{remark}\label{baker}
Notice that an important point in the definition of $S$ is its symmetry. That is, if $S(\xi,x)=(\tau (\xi), \rho_{\kappa(\xi)}(x))$, then it is easily calculated that the
inverse map has a symmetric expression, i.e.
\begin{equation}\label{ba}
 S^{-1}(\xi,x)=(\rho_{\kappa(x)}(\xi),\tau (x)).
\end{equation}
Moreover, if we define $\Pi: \mathbb{A}^2 \to \mathbb{A}$ by $\Pi(\xi,x)=x$ then Lemma \ref{lem:hp} states that $\hat{\phi}^{\pm}$ depend only on $x$, so that $\hat{P}$ is a union of fibers $\mathbb{A} \times \{z\}$.
\end{remark}
%%%%%%%%%%%%%%%%%%%%%%%%%%%%%%%%%%%%%%%%%%%%%%
%Describe $S^{-1}$ and define $\Pi:\A^2\to\A$, $\Pi(\xi,x)=x$. Then Lemma 2.11 states that $\hat\phi^\pm$ depend only on $x$, so that $\hat P$ is a union of fibres $\A\times\{z\}$.

%%%%%%%%%%%%%%%%%%%%%%%%%%%%%%%%%%%%%%%%%%%%
\begin{definition}
Define $L_{n}$ as the set of all discontinuity points of $S^{-n}$,
$$L_{n} :=\bigcup_ {k=0}^{n-1}    S^{-k} (\a \times  \{a\}),$$
and $$ L := \bigcup_{n=1}^{\infty} L_{n} = \bigcup_{k=0}^{\infty} S^{-k} (\a \times  \{a\}).$$
Also define
$$\tpp (\theta) := \lim_{r \mapsto 0 }\sup\{ \pp(\theta') : d(\theta' , \theta) < r \},$$
$$\tpn (\theta) := \lim_{r \mapsto 0 }\inf\{ \pn(\theta') : d(\theta' , \theta) < r \}.$$
Denote by $\tp = \{ \theta \in \Theta : \tpp (\theta) = \tpn (\theta) \}$ the set of pinch points and $\tcpn$ the set of continuity points  of $\tppn$.
\end{definition}
\begin{remark}
\begin{enumerate}
\item
The sets $L_{n}$ are finite unions of horizontal segments $\a \times  \{z\}$, and $L$ is a countable union of such segments.
\item $\mu (L_{n}) = 0$ for each $n$ and each $S$-invariant probability measure $\mu$.\item
$\phi_{n}^{\pm}$ is continuous at all points $\theta \in \Theta \backslash  L_{n} $ . \item
$\tpp$ is upper semi-continuous and $\tpn$ is lower semi-continuous and $\tpn \leq \pn \leq \pp \leq \tpp $. Thus, they are both Baire 1 functions. Consequently,   $\tcp , \tcn$ and also their intersection is residual.
\end{enumerate}
\end{remark}
\begin{lemma}\label{Lem:inv graph}
For every $\theta \in \Theta \backslash L$ we have  $\tppn (\theta) = \ppn (\theta)$ .
 \end{lemma}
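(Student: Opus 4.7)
The plan is to combine the monotone approximations $\phi_n^-\nearrow \pn$ and $\phi_n^+\searrow \pp$ with the pointwise continuity of each $\phi_n^{\pm}$ away from its own discontinuity set $L_n$, as noted in parts (1)--(4) of the preceding remark. Since $\theta \in \Theta \setminus L$ means precisely that $\theta \notin L_n$ for every $n$, each approximant $\phi_n^{\pm}$ is continuous at $\theta$. Moreover, the inequality $\tpn \leq \pn \leq \pp \leq \tpp$ is already known, so it will suffice to establish the reverse inequalities $\tpp(\theta)\leq \pp(\theta)$ and $\tpn(\theta) \geq \pn(\theta)$.

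I would first handle $\pp$. Fix $\epsilon>0$ and, using $\phi_n^+(\theta)\searrow \pp(\theta)$, pick $N$ so large that $\phi_N^+(\theta) < \pp(\theta)+\epsilon$. Continuity of $\phi_N^+$ at $\theta$ (available because $\theta\notin L_N$) provides $\delta>0$ with
\[
\phi_N^+(\theta')<\phi_N^+(\theta)+\epsilon \quad \text{whenever } d(\theta',\theta)<\delta.
\]
Since the monotone limit $\pp \leq \phi_N^+$ pointwise, this gives $\pp(\theta') < \pp(\theta)+2\epsilon$ on that neighbourhood, hence $\sup\{\pp(\theta'):d(\theta',\theta)<\delta\}\leq \pp(\theta)+2\epsilon$. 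Letting $\delta\to 0$ and then $\epsilon\to 0$ yields $\tpp(\theta)\leq \pp(\theta)$.

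The argument for $\pn$ is symmetric: use $\pn\geq \phi_n^-$, choose $N$ with $\phi_N^-(\theta)>\pn(\theta)-\epsilon$, and exploit continuity of $\phi_N^-$ at $\theta$ to conclude $\tpn(\theta)\geq \pn(\theta)$. Combining the two inequalities with the a priori chain $\tpn\leq \pn\leq \pp\leq \tpp$ finishes the proof.

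There is essentially no obstacle here; the whole statement is a Dini-type observation at a single point, made possible by the fact that only the countably many horizontal segments in $L$ can carry the discontinuities of the pullback approximants, and $\theta\notin L$ avoids them all.
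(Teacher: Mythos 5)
Your proof is correct and follows essentially the same route as the paper's: choose $N$ with $\phi_N^{+}(\theta)$ within $\epsilon$ of $\pp(\theta)$, use continuity of $\phi_N^{+}$ at $\theta$ (valid since $\theta\notin L\supseteq L_N$), and combine with $\pp\leq\phi_N^{+}$ to bound $\tpp(\theta)\leq\pp(\theta)+2\epsilon$. The argument for $\tpn$ is the symmetric one the paper leaves implicit, so nothing is missing.
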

\begin{proof}
By definition, $\pp(\theta) \leq \tpp(\theta)$ for all $\theta \in \Theta$. Let $\epsilon > 0 $ and $\theta \in \Theta \backslash L$. There exists $n>0$ such that $\pp (\theta) \geq \phi_{n}^{+}(\theta) - \epsilon$. As $\theta \notin  L$, there exists $r>0$ such that for every $\theta' \in \Theta$ with  $d(\theta',\theta) < r $, then $|\phi_{n}^{+}(\theta')-\phi_{n}^{+}(\theta)| < \epsilon$. Hence for every $\theta' \in B_{r}(\theta)$ :
\begin{equation}
\pp(\theta)  \geq \phi_{n}^{+}(\theta) - \epsilon  \geq \phi_{n}^{+}(\theta') - 2 \epsilon \geq \phi^{+} (\theta') - 2 \epsilon.
\end{equation}
Hence
$$\sup \{\phi^{+}(\theta') : d(\theta',\theta) < r \} \leq \pp (\theta) + 2\epsilon .$$
And so
$$\tpp(\theta) \leq \pp (\theta) + 2 \epsilon.$$
\end{proof}
\begin{corollary}
$\tp \subseteq P \subseteq \tp \cup L. $
\end{corollary}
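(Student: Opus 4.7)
The plan is to derive both inclusions directly from the chain of inequalities $\tpn \leq \pn \leq \pp \leq \tpp$ (recorded in the remark preceding Lemma~\ref{Lem:inv graph}) together with Lemma~\ref{Lem:inv graph}, which identifies $\tppn$ and $\ppn$ off the exceptional set $L$.

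For the inclusion $\tp \subseteq P$, I would take $\theta \in \tp$, so by definition $\tpp(\theta) = \tpn(\theta)$. Sandwiching with $\tpn(\theta) \leq \pn(\theta) \leq \pp(\theta) \leq \tpp(\theta)$ forces all four values to coincide, in particular $\pn(\theta) = \pp(\theta)$, whence $\theta \in P$. This step needs no information about $L$ and works for any $\theta \in \Theta$.

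For the inclusion $P \subseteq \tp \cup L$, I would fix $\theta \in P \setminus L$ and show $\theta \in \tp$. Since $\theta \notin L$, Lemma~\ref{Lem:inv graph} gives $\tpp(\theta) = \pp(\theta)$ and $\tpn(\theta) = \pn(\theta)$. Combining with $\theta \in P$, i.e.\ $\pp(\theta) = \pn(\theta)$, yields $\tpp(\theta) = \tpn(\theta)$, so $\theta \in \tp$.

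I do not anticipate any substantive obstacle here: the corollary is essentially a bookkeeping consequence of the previous lemma and the semicontinuity-based inequalities, and both inclusions follow in one line each once the right chain of (in)equalities is written down.
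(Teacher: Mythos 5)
Your argument is correct and is exactly the intended one: the paper leaves this corollary without proof precisely because it follows immediately from the chain $\tpn \leq \pn \leq \pp \leq \tpp$ and Lemma~\ref{Lem:inv graph}, as you write. Both of your one-line inclusions are valid as stated.
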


\begin{lemma}\label{sec}
$(\tcp \cap \tcn) \backslash \tp \subseteq \Theta \backslash \overline{\tp}.$
\end{lemma}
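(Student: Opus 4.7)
The plan is to show that any point at which both envelopes $\tpp$ and $\tpn$ are continuous, yet separated, has a whole open neighbourhood on which the separation persists; such a neighbourhood is disjoint from $\tp$, forcing the point out of $\overline{\tp}$.

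Concretely, I would take $\theta \in (\tcp \cap \tcn) \setminus \tp$ and set $\delta := \tpp(\theta) - \tpn(\theta)$. Since $\theta \notin \tp$ we have $\delta > 0$. Because $\tpp$ is continuous at $\theta$ (here I use $\theta \in \tcp$) and $\tpn$ is continuous at $\theta$ (here I use $\theta \in \tcn$), there exists $r > 0$ such that for every $\theta' \in B_r(\theta)$,
\begin{equation*}
|\tpp(\theta') - \tpp(\theta)| < \tfrac{\delta}{3}
\quad \text{and} \quad
|\tpn(\theta') - \tpn(\theta)| < \tfrac{\delta}{3}.
\end{equation*}

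From these two inequalities and the definition of $\delta$, a straightforward estimate gives
\begin{equation*}
\tpp(\theta') - \tpn(\theta')
\geq \bigl(\tpp(\theta) - \tfrac{\delta}{3}\bigr) - \bigl(\tpn(\theta) + \tfrac{\delta}{3}\bigr)
= \tfrac{\delta}{3} > 0
\end{equation*}
for every $\theta' \in B_r(\theta)$. Hence no point of $B_r(\theta)$ lies in $\tp$, which is exactly the assertion that $\theta \in \Theta \setminus \overline{\tp}$.

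There is no substantial obstacle: the argument is a direct consequence of continuity plus the definition of $\tp$ as the coincidence set of $\tpp$ and $\tpn$. The only point that deserves care is that continuity of both envelopes \emph{individually} at $\theta$ (rather than joint continuity or equality) is exactly what is needed so that both one-sided errors can be bounded simultaneously by $\delta/3$.
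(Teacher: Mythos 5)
Your proof is correct and follows essentially the same route as the paper's: continuity of both envelopes at $\theta$ combined with the strict separation $\tpp(\theta)>\tpn(\theta)$ yields a ball on which the separation persists, hence disjoint from $\tp$. The paper states this more tersely, but your explicit $\delta/3$ estimate is exactly the argument being invoked, and your remark that individual continuity (not mere semi-continuity) is the essential hypothesis is on point.
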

\begin{proof}
Let $\theta \in \tcp \cap \tcn$ but $\theta \notin \tp$ . According to the definition of $\tppn$ there exists $r>0$ such that for every $\theta' \in B_r(\theta)$, $\tpp(\theta') > \tpn(\theta')$. This means that $B_{r} (\theta) \cap \tp = \emptyset$, and therefore  $\theta \notin \overline{\tp}$.
\end{proof}
\begin{theorem}\label{theo:baker}
Either $P \subseteq L $ or $\tcp \cap \tcn = \tp \subseteq P$.
\end{theorem}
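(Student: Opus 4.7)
The plan is to assume $P \not\subseteq L$ and derive the second alternative. Before starting, I note the easy inclusion $\tp \subseteq \tcp \cap \tcn$: if $\tpp(\theta)=\tpn(\theta)$, then upper semi-continuity of $\tpp$, lower semi-continuity of $\tpn$, and the sandwich $\tpn\leq\tpp$ force both functions to be continuous at $\theta$. The inclusion $\tp \subseteq P$ is already the corollary to Lemma~\ref{Lem:inv graph}. So the real content to extract is $\tcp \cap \tcn \subseteq \tp$, and by Lemma~\ref{sec} this reduces to showing that $\tp$ is dense in $\Theta$.

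Pick $\theta_0\in P\setminus L$. Lemma~\ref{Lem:inv graph} gives $\tpp(\theta_0)=\pp(\theta_0)=\pn(\theta_0)=\tpn(\theta_0)$, so $\theta_0\in\tp$. Combining Lemma~\ref{lem:hp} with Proposition~\ref{prop:pset}(1), the set $P=\hat P$ depends on $\theta$ only through $\Pi\theta$, and in the Baker setting $\Pi(\xi,x)=x$, so $P$ is a union of horizontal fibres and $P=\a\times \Pi P$. Hence density of $P$ in $\Theta$ reduces to density of $\Pi P$ in $\T$.

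The key step is then to use backward $\hs$-invariance of $\Pi P$ from Proposition~\ref{prop:pset}(3). Here $\hs=\tau$ is the two-branch piecewise linear expanding map of $\T$ underlying the Baker transformation. Starting from $x_0=\Pi\theta_0$, backward invariance yields $\bigcup_{n\geq 0}\tau^{-n}(x_0)\subseteq \Pi P$; because $\tau$ is a full mixing Markov map, the backward orbit of any point is dense in $\T$, so $\Pi P$, and therefore $P$, is dense in $\Theta$. Since $L$ is a countable union of horizontal segments, hence meager with empty interior, the set $P\setminus L\subseteq\tp$ (by the Corollary) is dense as well, giving $\overline{\tp}=\Theta$.

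With $\overline{\tp}=\Theta$, Lemma~\ref{sec} yields $(\tcp\cap\tcn)\setminus \tp\subseteq \Theta\setminus \overline{\tp}=\emptyset$, hence $\tcp\cap\tcn\subseteq\tp$; together with the opening observation this gives $\tcp\cap\tcn=\tp$, and the Corollary supplies $\tp\subseteq P$. The main obstacle is recognising that the dichotomy is really ``$P\subseteq L$ or $P$ is dense in $\Theta$,'' which depends on two structural facts already in hand: that $P$ is saturated by the fibres of $\Pi$, and that backward orbits under the expanding Markov factor $\hs=\tau$ are dense in $\T$.
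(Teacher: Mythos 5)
Your strategy is essentially the paper's: assume $P\not\subseteq L$, use the fibre-saturation $P=\a\times\Pi P$ and the invariance from Proposition~\ref{prop:pset} to show $\tp$ is dense, and then conclude via Lemma~\ref{sec} together with the semi-continuity sandwich for the reverse inclusion. The only cosmetic difference is that the paper works with the forward images $S^{k}(\a\times\{z\})$ of a pinched segment, whose $\Pi$-projections are exactly your backward orbit $\bigcup_{n}\tau^{-n}(x_0)$; the two density arguments are the same set seen from either end of the factor relation $\hs\circ\Pi=\Pi\circ S^{-1}$.

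There is, however, one step whose stated justification fails: you pass from ``$P$ is dense and $L$ is meager with empty interior'' to ``$P\setminus L$ is dense.'' A dense set minus a meager set need not be dense when the dense set is itself meager, and the dense subset of $P$ you actually produced, namely $\a\times\bigcup_{n}\tau^{-n}(x_0)$, is a countable union of nowhere dense horizontal segments; a priori it could be entirely contained in $L$, which is also a countable union of such segments. What rescues the step is invariance, not category: since $L=\bigcup_{k\geq0}S^{-k}(\a\times\{a\})$ one has $S^{-n}L\subseteq L$ for all $n\geq0$, equivalently $\tau^{n}(\Pi L)\subseteq \Pi L$; and $x_0=\Pi\theta_0\notin\Pi L$ because $L$ is a union of full segments and $\theta_0\notin L$. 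Hence no $y$ with $\tau^{n}y=x_0$ can lie in $\Pi L$ (otherwise $x_0\in\tau^{n}(\Pi L)\subseteq\Pi L$), so $\a\times\bigcup_{n}\tau^{-n}(x_0)\subseteq P\setminus L\subseteq\tp$, which gives $\overline{\tp}=\Theta$. With this one-line repair the remainder of your argument is correct.
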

\begin{proof}
Recall from Proposition \ref{prop:pset} and Remark \ref{baker} that $P = \hpi$ and $\hpi$ is a union of segments $\a \times \{z\}$. Hence, if $P \not\subseteq L $, there exists $z \in \a$ such that $(\a \times \{z\} ) \subseteq P$ but $( \a \times \{ z\}) \cap L = \emptyset$. Since $P$ is $S$-invariant and $P \backslash L$ contains the dense set $\bigcup_{k=0}^{\infty} S^{k}(\a \times \{ z\}) $, so $P \backslash L$ is dense.
As $P \backslash L \subseteq \tp$ in view of the last corollary, also $\tp$ is dense, i.e  $\overline{\tp} = \Theta$ . Then Lemma \ref{sec} implies $\tcp \cap \tcn \subseteq \tp$. The converse inclusion $\tp \subseteq \tcp \cap \tcn$ follows at once from the semi-continuity of $\tppn$.
\end{proof}
\begin{corollary}\label{coro:baker}
Either $P \subseteq L$ or $P$ is residual.
\end{corollary}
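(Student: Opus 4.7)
The plan is to read off the corollary directly from Theorem~\ref{theo:baker} together with the Baire category observation recorded just before Lemma~\ref{Lem:inv graph}. Recall from that remark that $\tpp$ is upper semi-continuous and $\tpn$ is lower semi-continuous on $\Theta$, so both are Baire~1 functions; consequently each of the continuity sets $\tcp$ and $\tcn$ is residual in $\Theta$, and so is their intersection $\tcp \cap \tcn$.

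Now I would split on the dichotomy supplied by Theorem~\ref{theo:baker}. In the first case, $P \subseteq L$, which is exactly the first alternative of the corollary, so nothing needs to be shown. In the second case, the theorem gives $\tcp \cap \tcn = \tp \subseteq P$. Combining with the previous paragraph, $P$ contains the residual set $\tcp \cap \tcn$, hence the complement $\Theta \setminus P$ is contained in the meager set $\Theta \setminus (\tcp \cap \tcn)$, so $P$ itself is residual. This yields the second alternative.

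There is essentially no hard step here: the whole weight of the argument has already been absorbed into Theorem~\ref{theo:baker} and the semi-continuity/Baire~1 observation about $\tppn$. The only thing one has to be a little careful about is the direction of the implication ``contains a residual subset $\Longrightarrow$ is itself residual'', which is immediate from the definition of residual as complement of a meager set, but worth stating explicitly so the reader sees why $\tp \subseteq P$ (and not $P \subseteq \tp$) is enough. No extra hypotheses on $S$ or on the fibre dynamics beyond those already used in Theorem~\ref{theo:baker} are needed.
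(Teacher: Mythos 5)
Your proof is correct and is precisely the argument the paper intends (the corollary is stated without proof as an immediate consequence): apply the dichotomy of Theorem~\ref{theo:baker}, and in the second case note that $P$ contains the residual set $\tcp\cap\tcn$, hence is itself residual since its complement is contained in a meager set. No issues.
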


\bibliographystyle{amsplain}

\end{document}